\documentclass[a4paper, 11pt]{article}

\usepackage[T1]{fontenc}
\usepackage{lmodern}
\usepackage{textcomp}
\usepackage{microtype} % Modern addition: vastly improves spacing and typography

\usepackage[a4paper, margin=1in]{geometry}
\usepackage[singlespacing]{setspace}
\usepackage[displaymath]{lineno}
\usepackage{enumitem}
\usepackage{calc}
\usepackage{hhline}
\usepackage[dvipsnames]{xcolor}

\usepackage{amsmath, amsfonts, amssymb, amsthm}
\usepackage{mathtools} % Extends and improves amsmath
\usepackage{mathrsfs}
\usepackage{upgreek}
\usepackage{extarrows}
\usepackage{bbm}
\usepackage{cmupint}
\usepackage{bm}
\usepackage{graphicx}
\usepackage{tikz}
\usepackage{tikz-cd}
\usepackage{amscd}
\usepackage[all,cmtip]{xy}

\usepackage{cite} 
\usepackage{hyperref}

\hypersetup{
	colorlinks=true,
	breaklinks=true,
	urlcolor=blue,
	citecolor=BrickRed,
	linkcolor=teal,
	bookmarksopen=false,
	pdftitle={A Frobenius Theorem on Fr\'echet Manifolds},
	pdfauthor={Kaveh Eftekharinasab}
}

\usepackage{bookmark}
\makeatletter
\let\orgdescriptionlabel\descriptionlabel
\renewcommand*{\descriptionlabel}[1]{%
	\let\orglabel\label
	\let\label\@gobble
	\phantomsection
	\edef\@currentlabel{#1}%
	\let\label\orglabel
	\orgdescriptionlabel{#1}%
}
\makeatother

\theoremstyle{plain}
\newtheorem{theorem}{Theorem}[section]
\newtheorem{lemma}[theorem]{Lemma}
\newtheorem{corollary}[theorem]{Corollary}

\newtheorem{proposition}[theorem]{Proposition}
\newtheorem{remark}[theorem]{Remark}

\theoremstyle{definition}
\newtheorem{definition}[theorem]{Definition}
\newtheorem*{definition*}{Definition}

\numberwithin{equation}{section}

\DeclareMathOperator{\Lip}{Lip_{loc}}

\newcommand{\id}{\mathrm{Id}}

\newcommand{\abs}[1]{\lvert#1\rvert}

\newcommand{\snorm}[2][]{\left\lVert#2\right\rVert_{#1}}

\newcommand{\TB}[1]{\mathsf{#1}}
\newcommand{\Vfields}[3]{\eu{V}_{C_c}^{#1}(#2, #3)}

\newcommand{\va}{\varphi}
\newcommand{\fs}[1]{\mathsf {#1}}
\newcommand{\eu}[1]{\EuScript {#1}}

\newcommand{\vf}[1]{\mathcal{#1}}
\DeclareMathAlphabet{\mathpzc}{OT1}{pzc}{m}{it}
\DeclareMathAlphabet{\mathsfit}{OT1}{cmss}{m}{sl}
\DeclareMathAlphabet\EuScript{U}{eus}{m}{n}
\SetMathAlphabet\EuScript{bold}{U}{eus}{b}{n}

\renewcommand{\emptyset}{\varnothing}
\newcommand{\set}[1]{\left\{#1\right\}}
\newcommand\Set[2]{\left\{#1\mid#2\right\}} 
\newcommand{\zero}[1]{\boldsymbol{0}_{#1}}

\newcommand{\pc}{\partial_{\mathrm{c}}}

\newcommand{\bl}[1]{\textup{#1}}

\newcommand{\fr}{Fr\'{e}chet}

\providecommand{\U}[1]{\protect\rule{.1in}{.1in}}

\newcommand{\subjclass}[1]{\textbf{AMS Subject Classifications (2020):} #1\par}
\newcommand{\keywords}[1]{\textbf{Keywords:} #1\par}

\def\raisefix#1{%
	\ifx#1\displaystyle\raise.39ex\else
	\ifx#1\textstyle\raise.39ex\else
	\ifx#1\scriptstyle\raise.275ex\else
	\raise.150ex\fi\fi\fi
}
\def\stylefix#1{%
	\ifx#1\displaystyle\scriptstyle\else
	\ifx#1\textstyle\scriptstyle\else
	\ifx#1\scriptstyle\scriptscriptstyle\else
	\scriptscriptstyle\fi\fi\fi
}

\sbox0{$\fontdimen8\textfont3=0.4pt$}
\DeclareFontFamily{U}{mathx}{\hyphenchar\font45}
\DeclareFontShape{U}{mathx}{m}{n}{%
	<5> <6> <7> <8> <9> <10>
	<10.95> <12> <14.4> <17.28> <20.74> <24.88>
	mathx10
}{}

\title{A Frobenius Theorem on Fr\'echet Manifolds}
\author{Kaveh Eftekharinasab\\[0.5ex]
	\small Algebra and Topology Department, Institute of Mathematics of the \\
	\small National Academy of Sciences of Ukraine \\
	\small Tereshchenkivska st.~3, 01024, Kyiv, Ukraine \\ 
	\small \texttt{kaveh@imath.kiev.ua}}
\date{}

\begin{document}
	
	\maketitle
	
	\begin{abstract}
	We investigate the integrability of Fr\'{e}chet tangent distributions on Fr\'{e}chet manifolds. We introduce the local well-posedness Condition W for split tangent subbundles, which reduces the local integrability problem to solving initial value problems  with parameters whose solutions define curves tangent to the distribution. By applying a variational approach to establish the existence and uniqueness of these solutions, we prove a Frobenius theorem stating that involutivity and Condition W are sufficient for integrability. This yields the existence of a unique maximal foliation of the manifold. Furthermore, we provide a dual formulation of the theorem using differential forms, which characterizes the algebraic conditions for integrability via the exterior derivative of the subbundle's local annihilator.
	\end{abstract}
	
	\subjclass{58B10,  	58A30,  58E05,	57R30.} % Update with your actual classes
	\keywords{Frobenius theorem, Foliations, Palais--Smale condition, Fr\'{e}chet manifolds,  Keller's \(C_c^k\)-calculus, Involutive distributions.} % Update with your keywords
	
	% \linenumbers % Uncomment if you want line numbers active
\section{Introduction}\label{sec:introduction}
We establish a  version of the Frobenius Theorem and its formulation in terms of foliations for infinite-dimensional Fr\'echet tangent distributions on Fr\'echet manifolds. While the Frobenius theorem is well-established for Banach manifolds, its extension to the Fr\'echet framework is obstructed by the failure of the Picard--Lindel\"of theorem in non-normable locally convex spaces. This failure implies that differentiable vector fields may fail to admit local flows, or that such flows may lack continuous dependence on initial conditions.
Consequently, the {split tangent subbundle} cannot be locally integrated into a family of {maximal connected integral submanifolds} unless additional  conditions are imposed to guarantee the existence of a local foliation. Thus, the involutivity of a tangent subbundle is a necessary but insufficient condition for integrability. 

Several approaches and frameworks have been developed to study the Frobenius theorem beyond the context of Banach manifolds. In~\cite{tich}, Teichmann established a version of the Frobenius theorem for finite-rank involutive subbundles on convenient manifolds by employing the local finite-dimensionality of the distribution. Later, Eyni~\cite{eyn} investigated Banach distributions on manifolds modeled on locally convex spaces. More recently, Pelletier~\cite{f} and Cabau and Pelletier~\cite{cabau} have focused on the integrability of distributions within the frameworks of projective and direct limits of Banach manifolds, respectively. In this paper, we address the integrability problem for general Fr\'{e}chet distributions that do not necessarily admit a Banach structure or a representation as a limit of Banach spaces.

In this paper, we introduce the local well-posedness Condition W for split tangent subbundles (Definition~\ref{def:condition_W}). This condition reformulates the problem of local integrability as that of solving initial value problems (IVPs) with parameters, the solutions of which define tangent curves of the distribution. We treat these IVPs variationally by associating them with an appropriately constructed functional~$\eu{I}$. By requiring that~$\eu{I}$ satisfies Palais--Smale-type conditions, we establish the existence and uniqueness of critical points, which correspond precisely to the solutions of the IVPs, thereby providing the analytical basis for integrating involutive distributions and constructing the leaves of the resulting foliation. We note, however, that the Palais--Smale conditions can be technically challenging to verify and may be difficult to establish in certain settings; for further discussion and  examples, we refer the reader to~\cite{k5}.

In Proposition \ref{prop:efc}, we prove that a split tangent subbundle is locally integrable provided it is both involutive and satisfies Condition W. By virtue of this result, in Theorem \ref{thm:global_frobenius} we prove that these conditions are sufficient to prove the existence of a global atlas of Frobenius charts and that every point in the manifold belongs to a unique maximal connected integral submanifold. In Theorem \ref{thm:global_frobenius_foliation}, we establish the one-to-one correspondence between the integrability of the subbundle and the existence of a foliation on the Fr\'echet manifold. Finally, Corollary \ref{cor:frobenious} provides a dual formulation of integrability in terms of differential forms, formulated via the exterior derivative of the subbundle's local annihilator.

\section{Preliminaries}
We assume that \( (\fs{F}, \textsf{Sem}({\fs{F}})) \) and \( (\fs{E}, \textsf{Sem}({\fs{E}})) \) are Fr\'echet spaces over \( \mathbb{R} \), where 
\(\textsf{Sem}({\fs{F}}) = (\snorm[\fs{F},n]{\cdot})_{n \in \mathbb{N}}\)
and \(\textsf{Sem}({\fs{E}}) = (\snorm[\fs{E},n]{\cdot})_{n \in \mathbb{N}}\)
are increasing sequences of  seminorms that define the topologies of \( \fs{F} \) and \( \fs{E} \), respectively. 
We denote by \( \zero{\fs{X}} \)  the origin  of a \fr\ space \( \fs{X} \).

Let \( \mathfrak{S} \) denote the family of all compact subsets of \( \fs{F} \). Let \( \mathcal{L}_c(\fs{F}, \fs{E}) \) be the space of all continuous linear mappings from \( \fs{F} \) to \( \fs{E} \), endowed with the topology of compact convergence. 
This topology is Hausdorff, locally convex, and is defined by the family of seminorms
\[
\left\lVert \ell \right\rVert_{S,i} \coloneqq \sup \set{ \snorm[\fs{E},i]{\ell(f)} \mid f \in S },
\]
where \( S \in \mathfrak{S} \) and \( i \in \mathbb{N} \). 
When \( \fs{E} = \mathbb{R} \) (with the standard absolute value \( \left\lvert \cdot \right\rvert \)), the dual space \( \fs{F}'_c = \mathcal{L}_c(\fs{F}, \mathbb{R}) \) is endowed with the topology defined by the family of seminorms \( \{ \left\lVert \cdot \right\rVert_{S} \mid S \in \mathfrak{S}\} \).

\begin{definition}[Definition 1.0.0, \cite{ke}]\label{def:diff}
	Let \(U \subseteq  \mathsf {E}\) be open, and  $ \varphi\colon U   \to  \mathsf {F}$  a mapping. Then the  derivative
	of $\varphi$ at $x$ in the direction $h$ is defined by 
	\[
	\mathrm{D}\varphi(x)(h) \coloneqq \lim_{t \to 0} \frac{1}{t} \left( \varphi(x+th) - \varphi(x) \right),
	\]
	provided the limit exists. 
	The mapping \(\varphi\) is called differentiable at \(x\) if \(\mathrm{D} \varphi(x)(h)\) exists for all \(h \in \fs{E}\). 
	Furthermore, \(\varphi\) is called a \(C_c^1\)-mapping if it is differentiable at all points of \(U\) and the derivative map 
	\(
	\mathrm{D} \varphi \colon U \to \mathcal{L}_c(\fs{E}, \fs{F})
	\) is continuous. 
\end{definition}
Higher order derivatives are defined naturally (Definition 2.5.0, \cite{ke}). For a continuous curve $ \gamma\colon I=(a,b) \to \fs{F}  $, we define its derivative by
\begin{equation*}
	\gamma'(t) = \lim_{h \to 0} \frac{\gamma(t+h)-\gamma(t)}{h}.
\end{equation*}
If the limit exists and is finite, and $ \gamma'(t) $ is continuous, we say that $ \gamma $ is $ C^1 $, a notion which coincides with  Keller's $C_c^1$-differentiability. If $ I=[a,b] $, the extension of the derivative by continuity of $ \gamma' $ to $ [a,b] $ has the values $ \gamma'(a) $ and $ \gamma'(b) $ equal to
\begin{equation*}
	\gamma'(a) = \lim_{ h \downarrow 0} \dfrac{\gamma(a+h)-\gamma(a)}{h}, \quad \gamma'(b) = \lim_{ h \downarrow 0} \dfrac{\gamma(b)-\gamma(b-h)}{h}.
\end{equation*}
We denote by $ \fs{C}^k({I}, \fs{F})$ the \fr\ space of all Keller's $C_c^k$-mappings from ${I}$ to $\fs{F}$ for $k \geq 1$.
\begin{remark}
	Employing this class of mappings is primarily motivated by the requirement for a suitable topology on dual spaces to  define the Palais--Smale condition. These mappings are known as Keller's \(C_c^k\)-mappings. Notably, this notion of differentiability is equivalent to the well-established and widely used Michal--Bastiani notion.
\end{remark}

\begin{definition}[Definition 1.1, \cite{eftekharinasab}]\label{def:PS}
	Let $\va \colon \fs{F} \to \mathbb{R}$ be a Keller's $C_c^1$-functional.
	\begin{itemize}
		\item[(i)] We say that $\va$ satisfies the Palais--Smale condition (PS-condition) if every sequence $(x_i) \subset \fs{F}$ such that the sequence $(\va(x_i))$ is bounded and
		\(
		\mathrm{D} \va(x_i) \xrightarrow{\fs{F}_c'} 0
		\),	possesses a convergent subsequence.
		\item[(ii)] We say that $\va$ satisfies the Palais--Smale condition at level $m \in \mathbb{R}$ ($\mathrm{(PS)_m}$-condition) if every sequence $(x_i) \subset \fs{F}$ such that
		\(\va(x_i) \to m \) and \( \mathrm{D} \va(x_i) \xrightarrow{\fs{F}_c'} 0\), possesses a convergent subsequence.
	\end{itemize}
\end{definition}

To broaden the applicability of our results, we also consider functionals that are merely locally Lipschitz. Consequently, we require elements of non-smooth analysis to handle such mappings. In \cite{k1}, a critical point theory for these mappings, generalizing the Clarke subdifferential, was developed. Below, we review the concepts necessary for our subsequent results.

Let \( \langle \fs{F}, \fs{F}' \rangle \) be a dual pairing. 
%The weak topology \( \sigma(\fs{F},\fs{F}') \) on \( \fs{F} \) is defined by the family of seminorms:
%\begin{equation*}
%\snorm[A]{y} \coloneqq \sup_{x' \in A} \abs{ \langle x', y \rangle}, \quad \text{for } y \in \fs{F},
%\end{equation*}
%where \( A \) ranges over the finite subsets of \( \fs{F}' \). 
The weak* topology \( \sigma^*(\fs{F}',\fs{F}) \) on \( \fs{F}' \) is defined by the family of seminorms
\begin{equation*}\label{eq:wss}
	\snorm[B]{x} \coloneqq \sup_{y \in B} \abs{ \langle x, y \rangle}, \quad \text{for } x \in \fs{F}',
\end{equation*}
where \( B \) ranges over the set \(\Phi_{\fs{F} } \) of all finite subsets of \( \fs{F} \). Let \( \Lip(\fs{F}, \mathbb{R}) \) denote the set of locally Lipschitz functionals on \( \fs{F} \). Following the definitions in \cite{k1}, for \( \va \in \Lip(\fs{F}, \mathbb{R}) \), the generalized directional derivative \( \va^{\circ}(x,y) \) at \( x \in \fs{F} \) in the direction \( y \in \fs{F} \) is defined by
\begin{equation*}\label{def:gg}
	\va^{\circ}(x,y) \coloneqq \limsup_{h \to x, t \downarrow 0} \frac{\va(h+ty) - \va(h)}{t}.
\end{equation*}
The Clarke subdifferential of \( \va \) at \( x \) is the set-valued mapping \( \pc \va \colon \fs{F} \rightrightarrows \fs{F}' \) defined by
\begin{equation*}
	\pc \va(x) \coloneqq \left\{ x' \in \fs{F}' \mid \langle x', y \rangle \leq \va^{\circ}(x,y) \text{ for all } y \in \fs{F} \right\}.
\end{equation*}
The set \( \pc \va(x) \) is weak*-compact, thus
\begin{gather*}\label{eq:chpsc}
	\lambda_{\va,B} \colon \fs{F} \to \mathbb{R}, \quad
	\lambda_{\va,B}(x) = \min_{z \in \pc \va(x)} \snorm[B]{z},\, B \in \Phi_{\fs{F} } 
\end{gather*}
is a well-defined function.
\begin{definition}[Definition 2.1, \cite{k1}]\label{def:ChangPS}
	Let $\va \in \Lip(\fs{F}, \mathbb{R})$. 
	We say that $\va$ satisfies the Palais--Smale condition in Chang's sense, or the Chang PS-condition, if every sequence $(x_i) \subset \fs{F}$ such that the sequence $(\va(x_i))$ is bounded and 
	\begin{equation}\label{eq:cpsc}
		\lim_{i \to \infty} \lambda_{\va,B}(x_i) = 0 \quad \text{for each finite subset } B \subset \fs{F},
	\end{equation}
	possesses a convergent subsequence. Additionally, if every sequence $(x_i) \subset \fs{F}$ satisfying \eqref{eq:cpsc} and such that $\va(x_i) \to m \in \mathbb{R}$ possesses a convergent subsequence, we say that $\va$ satisfies the Chang PS-condition at level $m$ (denoted as $(\mathrm{PS})_m$).
\end{definition}
Now, we will define a class of functionals that we will need.  Let \(\fs{X}\) be a \fr\ space.
Let \(\bl{AU}(\fs{X},[0,\infty))\) be the set of all functionals \(\eu{I}\colon\fs{X}\to [0,\infty)\) such that either
\begin{enumerate}
	\item \(\eu{I}\) is a Keller's \(C^1_c\)-functional satisfying
	\(\eu{I}(x)=0 \iff x=\mathbf{0}_{\fs{X}}\)\label{itm:sgdp.1} and
	\(\mathrm{D}\eu{I}(y)=0 \iff y=\mathbf{0}_{\fs{X}}\).\label{itm:sgdp.2}
	\item \(\eu{I} \in \Lip(\fs{X}, \mathbb{R})\)  satisfying
	\(\eu{I}(x)=0 \iff x=\mathbf{0}_{\fs{X}}\)\label{itm:gdp1} and
	\(\mathbf{0}_{\fs{X}'}\in\pc\eu{I}(y) \iff y=\mathbf{0}_{\fs{X}}\)\label{itm:gdp2}.
\end{enumerate}
Consider the following initial value problem depending on a parameter $e$
\begin{equation}\label{eq:ivp}
	\begin{cases}
		\eu{Y}'(t) = \upphi(t, \eu{Y}(t), e), \quad \forall t \in \bl{I} \\
		\eu{Y}(t_0) = f.
	\end{cases}
\end{equation}
Here, the values $\eu{Y}(t)$ belong to the Fr\'echet space $\fs{F}$, $a > 0$, $t_0 \in \mathbb{R}$, and $e$ belongs to the Fr\'echet space $\fs{E}$. The interval is given by $\bl{I} = [t_0-a, t_0+a]$. Additionally, we suppose that $\upphi \colon \bl{I} \times \fs{F} \times \fs{E} \to \fs{F}$ is a Keller's $C_c^1$-mapping. 
\begin{theorem}[Theorem 4.1, \cite{kau}]\label{th:ivp}
	Let \( \eu{I} \in \bl{AU}(\mathbb{R} \times \fs{F} \times \fs{E}, [0,\infty)) \), and suppose that the following condition holds$\colon$
	\begin{enumerate}[label={{\textup{(C)}}},ref=C]
		\item\label{itm:ivp.1} For any \( \psi \in \fs{C}^2 ([-1,1],\fs{F}) \) and each fixed \( s \in [-1,1] \), the mapping
		\begin{gather*}
			\eu{J}_{\psi,s}\colon \mathbb{R} \times \fs{F} \times \fs{E} \to [0,\infty) \\
			\eu{J}_{\psi,s} (a,f,e) = \eu{I}\big(\psi'(s) - a\upphi(t_0+as, \psi(s)+f,e)\big)
		\end{gather*}
		satisfies the $\mathrm{PS}$-condition at any level if \( \eu{I} \) is a Keller's \( C^1_c \)-functional. Moreover, it satisfies the Chang \textup{PS}-condition at any level if \( \eu{I} \) is locally Lipschitz. 
	\end{enumerate}
	Then, there exists \( b \in (0,a] \) such that the IVP \eqref{eq:ivp} has a unique solution \( \eu{Y} = \eu{Y}(t;f,e) \in \fs{C}^2 ([t_0-b,t_0+b],\fs{F} ) \) for each \( (f,e) \in \fs{F} \times \fs{E} \). In addition, the mapping
	\begin{gather}\label{map:psi}
		\uppsi\colon(t_0-b,t_0+b) \times \fs{F} \times \fs{E} \to \fs{F}, \quad (t,f,e) \mapsto \eu{Y}(t;f,e)
	\end{gather}
	is of class Keller's \( C_c^1 \).
\end{theorem}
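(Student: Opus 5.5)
The plan is to follow the variational route: rescale time to a fixed interval, then read off the solution as the unique zero of an auxiliary functional built from $\eu{I}$ and the family $\eu{J}_{\psi,s}$ of Condition~\ref{itm:ivp.1}.

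\textbf{Step 1 (rescaling).} Substitute $t = t_0 + \alpha s$ with $s\in[-1,1]$ and set $\psi(s) = \eu{Y}(t_0+\alpha s) - f$. Then \eqref{eq:ivp} on $[t_0-\alpha,t_0+\alpha]$ is equivalent to
\[
\psi'(s) = \alpha\,\upphi(t_0+\alpha s, \psi(s)+f, e), \qquad \psi(0)=\zero{\fs{F}}.
\]
So $\eu{Y}$ solves the IVP exactly when, for every $s$, the triple $(\alpha,f,e)$ satisfies
\[
\psi'(s)-\alpha\upphi(t_0+\alpha s,\psi(s)+f,e)=\zero{\fs{F}},
\]
that is, exactly when $\eu{J}_{\psi,s}(\alpha,f,e)=0$, since $\eu{I}\in\bl{AU}$ vanishes only at the origin. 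The minimum value $0$ is then the critical level.

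\textbf{Step 2 (existence).} For a candidate $\psi$, apply a minimization principle for $\eu{J}_{\psi,s}$, bounded below by $0$, at the level $m=\inf \eu{J}_{\psi,s}$. In the $C^1_c$ case I would use Ekeland's principle in the Fréchet setting of \cite{eftekharinasab}; in the locally Lipschitz case, its Chang-subdifferential analogue from \cite{k1}. This yields a Palais--Smale sequence, and Condition~\ref{itm:ivp.1} extracts a convergent subsequence whose limit is a critical point. The $\bl{AU}$ property, $\mathrm{D}\eu{I}(y)=0$ (or $\zero{}\in\pc\eu{I}(y)$) only for $y=\zero{}$, together with the chain rule, then converts this critical point into a zero of the argument of $\eu{I}$.

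To produce $\psi$ itself, I would iterate: a Picard-type scheme $\psi_{k+1}(s)=\int_0^s \alpha\upphi(t_0+\alpha\sigma,\psi_k(\sigma)+f,e)\,d\sigma$ in $\fs{C}^2([-1,1],\fs{F})$. I would use the PS-compactness of the functionals $\eu{J}_{\psi_k,s}$ to extract a convergent subsequence of $(\psi_k)$, and choose $b\le a$ small so that $\alpha\in(0,b]$ keeps the iterates in a neighbourhood where $\upphi$ is controlled. The limit $\psi$ gives $\eu{J}_{\psi,s}(\alpha,f,e)=0$ for all $s$, hence a $\fs{C}^2$ solution.

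\textbf{Step 3 (uniqueness).} If $\eu{Y}_1$ and $\eu{Y}_2$ are two solutions, consider $\eu{J}$ along their difference. By PS compactness, any sequence of approximate zeros has a convergent subsequence, and since critical points are isolated zeros, the two solutions must coincide. I would argue this via a connectedness argument on the set $\{s : \psi_1(s)=\psi_2(s)\}$, which is closed, nonempty (it contains $0$), and open by local uniqueness derived from the PS condition.

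\textbf{Step 4 (regularity of $\uppsi$).} The map $(t,f,e)\mapsto\eu{Y}(t;f,e)$ is characterized implicitly by $\Phi(\psi,f,e)=\zero{}$, where $\Phi(\psi,f,e)=\psi'-\alpha\upphi(\dots)$. Continuity follows from uniqueness plus PS compactness: limits of solutions are solutions. For $C^1_c$ I would show that the formally differentiated linear IVP for $\partial_f\eu{Y}$ and $\partial_e\eu{Y}$ is of the same type, so Steps 2--3 apply to it. Its solution then gives the directional derivatives, and continuity of the derivative into $\mathcal{L}_c$ follows from the same compactness argument.

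\textbf{Main obstacle.} The main obstacle is Step 2's passage from pointwise-in-$s$ PS compactness to a solution curve, i.e., a uniform choice of $b$ independent of $(f,e)$. Without a norm there is no contraction estimate, so I would try first to get equicontinuity of the iterates from the PS condition applied uniformly on the compact set $[-1,1]$, via a diagonal argument.
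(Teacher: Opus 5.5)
The paper does not prove this theorem; it quotes it from \cite{kau}. Your proposal therefore has to stand on its own, and it has genuine gaps. The central one is Step~2, which solves for the wrong unknown.

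Write $\eu{J}_{\psi,s}=\eu{I}\circ G_{\psi,s}$ with $G_{\psi,s}(a,f,e)=\psi'(s)-a\upphi(t_0+as,\psi(s)+f,e)$. A critical point $x^*$ only tells you that $\mathrm{D}\eu{I}(G_{\psi,s}(x^*))$ vanishes on the range of $\mathrm{D}G_{\psi,s}(x^*)$. To invoke the $\bl{AU}$ property you need that range to be dense, and nothing provides this. Its $f$- and $e$-components are $-a\,\mathrm{D}_2\upphi$ and $-a\,\mathrm{D}_3\upphi$, so at $a=0$ the range is at most one-dimensional.

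Even a genuine zero is only some triple $(a^*,f^*,e^*)$ depending on $\psi$ and $s$. The IVP instead prescribes $(f,e)$ and a single scaling $a$, and asks for a curve $\psi$ with $G_{\psi,s}(a,f,e)=\zero{\fs{F}}$ for all $s$ simultaneously. Constant curves make this stark. For $\psi$ constant, $\eu{J}_{\psi,s}$ vanishes on all of $\{0\}\times\fs{F}\times\fs{E}$. Every such point is a global minimizer, hence critical (also in Clarke's sense), yet these points have nothing to do with the IVP. Moreover, $(0,if_0,e)$ with $f_0\neq\zero{\fs{F}}$ is then a Palais--Smale sequence at level $0$ with no convergent subsequence. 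So Condition~\ref{itm:ivp.1}, read literally as you read it, cannot hold once $\fs{F}\neq\{\zero{\fs{F}}\}$. You should take the hypothesis in its exact form from \cite{kau} before building on it.

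The Picard fallback does not repair this. The Palais--Smale condition concerns sequences in the domain of one fixed functional. It gives no compactness for a sequence $(\psi_k)$ in $\fs{C}^2([-1,1],\fs{F})$ whose terms index different functionals. Non-convergence of Picard iterates without a norm is exactly the obstruction the theorem is meant to bypass. In any case, no local scheme can produce one $b$ for all $(f,e)$: already $y'=y^2$ on $\mathbb{R}$ has existence time $1/\abs{f}$. So the obstacle you flag at the end is real and unresolved.

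Steps~3 and~4 likewise assume what has to be proved. Local uniqueness is the whole content of the uniqueness claim; Corollary~\ref{cor:gift} derives global uniqueness from it by exactly your connectedness argument. Local uniqueness genuinely fails in Fr\'echet spaces. On $\mathbb{R}^{\mathbb{N}}$ the linear system $x_n'=x_{n+1}$ has, besides $x\equiv 0$, the solution $x_n=g^{(n-1)}$, where $g(t)=e^{-1/t^2}$ for $t>0$ and $g(t)=0$ for $t\le 0$. ``Critical points are isolated zeros'' does not follow from any Palais--Smale condition.

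For regularity there are three separate problems. Condition~\ref{itm:ivp.1} says nothing about the linearized equation. The PS condition gives no compactness for families of solutions indexed by $(f,e)$. And a solution of the variational equation is not yet the derivative of $\uppsi$; that needs convergence of difference quotients, i.e.\ a Gronwall-type estimate, which is unavailable here.

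The missing idea is to make the curve the unknown. The shape of $\eu{J}_{\psi,s}$ is Robbin's reformulation of the IVP: $\Phi(a,f,e,\psi)=\psi'-a\,\upphi(t_0+a\,\cdot\,,\psi+f,e)=0$ on curves with $\psi(0)=\zero{\fs{F}}$. At $a=0$ this reduces to $\psi\mapsto\psi'$, an isomorphism onto continuous curves. A route in that direction would solve $\Phi=0$ for $\psi$ through a Palais--Smale-based (global) implicit function theorem whose auxiliary functional has $\psi$ as its variable. There, the nondegeneracy of the derivative in $\psi$ is what turns critical points into zeros. Existence, uniqueness and $C_c^1$-dependence on $(t,f,e)$ then come out together, rather than from separate unsupported steps.
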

\begin{corollary}\label{cor:gift}
	Let the hypotheses of Theorem \ref{th:ivp} hold. Let \( \upphi_e\colon \bl{I} \times \fs{F} \to \fs{F} \) be the mapping defined by \( \upphi_e(t, f) = \upphi(t, f, e) \) for a fixed \( e \in \fs{E} \). If \( \varphi_1, \varphi_2 \in \fs{C}^2(\bl{I}, \fs{F}) \) are two solutions satisfying \( \varphi'_i(t) = \upphi_e(t, \varphi_i(t)) \) for \( i=1,2 \) with the same initial condition \( \varphi_1(t_0) = \varphi_2(t_0) \), then \( \varphi_1 = \varphi_2 \) on \( \bl{I} \).
\end{corollary}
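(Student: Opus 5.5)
The idea is to reduce global uniqueness on \(\bl{I}\) to the local uniqueness in Theorem~\ref{th:ivp}, applied at every initial time, and then run a connectedness argument on \(\bl{I}\). The key observation is that the hypotheses of Theorem~\ref{th:ivp}, in particular Condition~(\ref{itm:ivp.1}), are formulated for a given base point \(t_0\). Since \(\upphi\) is defined on all of \(\bl{I}\times\fs{F}\times\fs{E}\), we may re-center the IVP at any \(\tau\in\bl{I}\). For \(\tau\) in the interior we shrink the interval to \([\tau-a_\tau,\tau+a_\tau]\subset\bl{I}\). At the endpoints we use one-sided intervals, or equivalently extend \(\upphi\) slightly in \(t\), for instance by \(\upphi(t,f,e)=\upphi(t_0\pm a,f,e)\) beyond the endpoints; this keeps the maps involved of class \(C^1_c\) on the relevant half-interval.

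I will assume, as the corollary implicitly intends, that the hypotheses of Theorem~\ref{th:ivp} hold uniformly with respect to the choice of initial time. Concretely, Condition~(\ref{itm:ivp.1}) with \(t_0\) replaced by \(\tau\) is satisfied for each \(\tau\in\bl{I}\). Theorem~\ref{th:ivp} then gives some \(b_\tau>0\) such that the IVP with \(\eu{Y}(\tau)=g\) has a unique \(\fs{C}^2\) solution on \([\tau-b_\tau,\tau+b_\tau]\cap\bl{I}\), for every \(g\in\fs{F}\) and our fixed \(e\).

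Set
\[
A=\Set{t\in\bl{I}}{\varphi_1(s)=\varphi_2(s)\text{ for all } s \text{ between } t_0 \text{ and } t}.
\]
This set contains \(t_0\), and it is an interval containing \(t_0\).

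\textbf{Closedness.} \(A\) is closed by continuity of \(\varphi_1-\varphi_2\) and the Hausdorff property of \(\fs{F}\).

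\textbf{Openness.} \(A\) is open in \(\bl{I}\). Take \(\tau\in A\). The restrictions of \(\varphi_1\) and \(\varphi_2\) to \([\tau-b_\tau,\tau+b_\tau]\cap\bl{I}\) are both \(\fs{C}^2\) solutions of the IVP with the same initial value \(\varphi_1(\tau)=\varphi_2(\tau)\). By the uniqueness part of Theorem~\ref{th:ivp}, they coincide there.

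Since \(\bl{I}\) is connected and \(A\) is a nonempty clopen subset, \(A=\bl{I}\), and therefore \(\varphi_1=\varphi_2\) on \(\bl{I}\).

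The main obstacle is the legitimacy of invoking Theorem~\ref{th:ivp} at an arbitrary \(\tau\). It must be checked that Condition~(\ref{itm:ivp.1}) transfers to the shifted problem. The functional \(\eu{J}_{\psi,s}\) involves \(\upphi(t_0+as,\cdot,\cdot)\), and re-centering changes this to \(\upphi(\tau+as,\cdot,\cdot)\). Because (\ref{itm:ivp.1}) is required for all \(s\) and all \(a\), I would try to absorb the shift by a reparametrization \(\tilde\psi\) of \(\psi\). If this does not work directly, I would state the uniform-in-\(\tau\) validity of (\ref{itm:ivp.1}) as part of the standing hypotheses.

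A second subtlety is that uniqueness in Theorem~\ref{th:ivp} is asserted for the solution on the small interval, so I must make sure that restrictions of \(\varphi_i\) are admissible competitors there. This holds because restriction preserves \(\fs{C}^2\)-regularity and the ODE.
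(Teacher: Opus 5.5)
Your proposal follows essentially the same route as the paper. The paper takes $S=\{t\in\bl{I} : \varphi_1(t)=\varphi_2(t)\}$, notes it is closed by continuity and the Hausdorff property, obtains openness from the local uniqueness of Theorem~\ref{th:ivp} re-centered at an arbitrary $t^*\in S$, and concludes by connectedness of $\bl{I}$. Your interval-shaped set $A$ is a harmless variant that even makes openness at the endpoints of $\bl{I}$ automatic, so the extension of $\upphi$ is unnecessary.

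The paper performs the re-centering tacitly, so your explicit standing assumption that Condition~(\ref{itm:ivp.1}) holds with $t_0$ replaced by every $\tau\in\bl{I}$ is exactly what its proof uses implicitly. The reparametrization idea cannot replace this assumption, because the required shift $(\tau-t_0)/a$ in $s$ depends on the argument $a$ of $\eu{J}_{\psi,s}$.
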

\begin{proof}
	Consider the set \( S = \{ t \in \bl{I}\colon \varphi_1(t) = \varphi_2(t) \} \). By hypothesis, \( t_0 \in S \). Since \( \fs{F} \) is Hausdorff and the maps are continuous, \( S \) is closed in \( \bl{I}\). Let \( t^* \in S \). By Theorem \ref{th:ivp}, there exists a smaller interval \( \bl{I}_b = [t^* - b, t^* + b] \cap \bl{I} \) with \( b > 0 \) such that the IVP starting at \( t^* \) has a unique solution. Thus, \( \varphi_1 \) and \( \varphi_2 \) must agree on \( \bl{I}_b \), which proves that \( S \) is open in the relative topology. Hence, connectivity of \( \bl{I} \) implies \( S = \bl{I} \).
\end{proof}

\begin{corollary}\label{lem:variational_parameter}
	Let the hypotheses of Theorem \ref{th:ivp} hold. For fixed initial data $f \in \fs{F}$, the partial derivative of the solution with respect to the parameter $e$, denoted by $\mathrm{D}_3 \eu{Y}(t; f, e)$, is a Keller's $C_c^1$-mapping. For every $v \in \fs{E}$, it satisfies the following linear differential equation in $\fs{F}$
	\begin{equation}\label{eq:var_eq_frechet}
		\mathrm{D}_1 \mathrm{D}_3 \eu{Y}(t; f, e)(v) = \mathrm{D}_2 \upphi(t, \eu{Y}(t; f, e), e)(\mathrm{D}_3 \eu{Y}(t; f, e)(v)) + \mathrm{D}_3 \upphi(t, \eu{Y}(t; f, e), e)(v),
	\end{equation}
	with the initial condition $\mathrm{D}_3 \eu{Y}(t_0; f, e)(v) = \mathbf{0}_{\fs{F}}$.
\end{corollary}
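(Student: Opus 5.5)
The plan is to differentiate the integral form of the IVP with respect to the parameter $e$ and then interchange derivatives, using only the regularity supplied by Theorem~\ref{th:ivp}.

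\textbf{Integral form.} By Theorem~\ref{th:ivp}, the map $\uppsi(t,f,e)=\eu{Y}(t;f,e)$ is Keller's $C^1_c$ on $(t_0-b,t_0+b)\times\fs{F}\times\fs{E}$. Fix $f$. Since $\eu{Y}(\cdot;f,e)$ is $C^1$ in $t$ with values in the Fr\'echet space $\fs{F}$, the fundamental theorem of calculus for Riemann integrals of continuous curves gives
\begin{equation*}
\eu{Y}(t;f,e)=f+\int_{t_0}^{t}\upphi\bigl(s,\eu{Y}(s;f,e),e\bigr)\,ds .
\end{equation*}

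\textbf{Differentiating in $e$.} The integrand $G(s,e)=\upphi(s,\uppsi(s,f,e),e)$ is a composition of $C^1_c$ maps, so it is $C^1_c$ by the chain rule for Keller's calculus. Its partial derivative in $e$ is
\begin{equation*}
\mathrm{D}_2 G(s,e)(v)=\mathrm{D}_2\upphi(s,\eu{Y},e)\bigl(\mathrm{D}_3\eu{Y}(s;f,e)(v)\bigr)+\mathrm{D}_3\upphi(s,\eu{Y},e)(v).
\end{equation*}
Next we differentiate under the integral sign in direction $v$. For fixed $e,v$, the difference quotient $\tfrac1h\bigl(G(s,e+hv)-G(s,e)\bigr)$ equals $\int_0^1 \mathrm{D}_2G(s,e+\tau hv)(v)\,d\tau$. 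This is jointly continuous in $(s,h)$ on the compact set $[t_0,t]\times[-1,1]$, by continuity of $\mathrm{D}G$ into $\mathcal{L}_c$ evaluated at the fixed vector $v$. It therefore converges uniformly in $s$ as $h\to0$, with respect to every seminorm $\snorm[\fs{F},n]{\cdot}$, and this justifies passing the limit inside the integral. Hence
\begin{equation*}
\mathrm{D}_3\eu{Y}(t;f,e)(v)=\int_{t_0}^t \Bigl[\mathrm{D}_2\upphi(s,\eu{Y},e)\bigl(\mathrm{D}_3\eu{Y}(s;f,e)(v)\bigr)+\mathrm{D}_3\upphi(s,\eu{Y},e)(v)\Bigr]ds .
\end{equation*}
Setting $t=t_0$ yields the initial condition $\mathrm{D}_3\eu{Y}(t_0;f,e)(v)=\mathbf{0}_{\fs{F}}$. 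Because the integrand is continuous in $s$, differentiating in $t$ yields \eqref{eq:var_eq_frechet}.

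\textbf{Regularity of $\mathrm{D}_3\eu{Y}$.} The identity above shows that $t\mapsto\mathrm{D}_3\eu{Y}(t;f,e)(v)$ is $C^1$, with a derivative that is continuous jointly in $(t,e,v)$. To upgrade to $\mathrm{D}_3\eu{Y}$ being Keller's $C^1_c$ in all variables, I would view $(t,e,v)\mapsto\mathrm{D}_3\eu{Y}(t;f,e)(v)$ as the solution of an augmented IVP on $\fs{F}\times\fs{F}$ with parameter $(e,v)\in\fs{E}\times\fs{E}$. Its right-hand side is
\begin{equation*}
(t,(y,z),(e,v))\mapsto\bigl(\upphi(t,y,e),\ \mathrm{D}_2\upphi(t,y,e)z+\mathrm{D}_3\upphi(t,y,e)v\bigr).
\end{equation*}
By uniqueness (Corollary~\ref{cor:gift}), this solution coincides with $(\eu{Y},\mathrm{D}_3\eu{Y}\,v)$, and Theorem~\ref{th:ivp} then gives the $C^1_c$ dependence.

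\textbf{Main obstacle.} This last step is the hardest. It requires the right-hand side to be $C^1_c$, which needs $\upphi\in C^2_c$. It also requires Condition (C) to hold for the augmented system with a suitable functional in $\bl{AU}$. I would try to build that functional as $\eu{I}$ plus a term for the linear component, and check the PS condition using the linear structure in $z$. If this cannot be made to work, I would state the conclusion in the weaker form: $\mathrm{D}_3\eu{Y}$ is continuous and $C^1$ in $t$, and satisfies the variational equation.
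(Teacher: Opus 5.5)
Your derivation is correct, but it takes a different route from the paper.

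The paper differentiates the identity $\mathrm{D}_1\eu{Y}(t;f,e)=\upphi(t,\eu{Y}(t;f,e),e)$ directly in $e$. It expands the right-hand side by the chain rule and swaps $\mathrm{D}_3\mathrm{D}_1\eu{Y}=\mathrm{D}_1\mathrm{D}_3\eu{Y}$ using Keller's symmetry theorem for higher derivatives. It then reads off the initial condition by differentiating $\eu{Y}(t_0;f,e)=f$ in $e$. That is shorter, but the symmetry theorem is a statement about $C^2_c$ maps. Theorem~\ref{th:ivp} only makes $\uppsi$ jointly $C^1_c$ (and $C^2$ in $t$ alone), so the paper presupposes, rather than proves, that $\mathrm{D}_1\mathrm{D}_3\eu{Y}$ exists.

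Your argument avoids this. You pass through the integral equation, write the difference quotient in mean-value form, and use uniform convergence on $[t_0,t]\times[-1,1]$ in every seminorm. That is exactly the Schwarz-type interchange that works under $C^1_c$ hypotheses: it needs only continuity of $\mathrm{D}_2G$ (the paper's $\mathrm{D}_3\mathrm{D}_1\eu{Y}$), and it produces $\mathrm{D}_1\mathrm{D}_3\eu{Y}$ together with the variational equation. Your route costs a few more lines but closes a step the paper passes over.

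On the clause that $\mathrm{D}_3\eu{Y}$ is Keller's $C^1_c$, the paper's proof gives no argument beyond the variational equation itself. It therefore establishes no more than your weaker form: $t\mapsto\mathrm{D}_3\eu{Y}(t;f,e)(v)$ is $C^1$, jointly continuous in $(t,e,v)$, and solves the linear IVP. Your caution is justified. Differentiability of $\mathrm{D}_3\eu{Y}$ in $e$ is not supplied by Theorem~\ref{th:ivp}. Your augmented-IVP plan would also need $\upphi\in C^2_c$ and Condition~(C) for the augmented system, neither of which is assumed. The weaker form is also what the later computation of $k'(t)$ in the local Frobenius argument actually uses.
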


\begin{proof}
	Since $\eu{Y}(t; f, e)$ is the solution, we have
	\begin{equation}\label{eq:def_ident}
		\mathrm{D}_1 \eu{Y}(t; f, e) = \upphi(t, \eu{Y}(t; f, e), 
	\end{equation}
	for all $t$ in the existence interval. Since $\upphi$ and $\eu{Y}$ are of class Keller's $C_c^1$, we can apply the  derivative $\mathrm{D}_3$ with respect to the parameter $e$ to both sides of \eqref{eq:def_ident} in the direction $v \in \fs{E}$. On the left-hand side, we obtain
	\(
	\mathrm{D}_3 (\mathrm{D}_1 \eu{Y}(t; f, e))(v).
	\)
	By total symmetry of higher-order derivatives ([Theorem 2.4.10, Remark 2.4.1, \cite{ke}]), we have 
	\(
	\mathrm{D}_3 \mathrm{D}_1 \eu{Y}(t; f, e)(v) = \mathrm{D}_1 \mathrm{D}_3 \eu{Y}(t; f, e)(v).
	\)
	Next, we apply the chain rule ([Corollary 1.3.2, \cite{ke}]) to the right-hand side of \eqref{eq:def_ident}. Note that $e$ appears both in the second argument via the state $\eu{Y}$ and in the third argument as a direct parameter. This yields
	\begin{equation*}
		\mathrm{D}_3 (\upphi(t, \eu{Y}(t; f, e), e))(v) = \mathrm{D}_2 \upphi(t, \eu{Y}(t; f, e), e)(\mathrm{D}_3 \eu{Y}(t; f, e)(v)) + \mathrm{D}_3 \upphi(t, \eu{Y}(t; f, e), e)(v).
	\end{equation*}
	From the initial value problem, we have $\eu{Y}(t_0; f, e) = f$. Since the initial datum $f$ is independent of the parameter $e$, differentiating with respect to $e$ at $t = t_0$ gives
	\begin{equation*}
		\mathrm{D}_3 \eu{Y}(t_0; f, e)(v) = \mathrm{D}_3 (f)(v) = \mathbf{0}_{\fs{F}},
	\end{equation*}
	which concludes the proof.
\end{proof}

\section{A Frobenius Theorem}
Let $\fs{M}$ be a Keller's $C_c^k$-Fr\'echet manifold modeled on $\fs{F} = \fs{F}_1 \oplus \fs{F}_2$ (\( k \geq 2 \)). Let \(\pi: \mathrm{T}\fs{M} \to \fs{M} \) be the tangent bundle. A subset $S \subseteq \fs{M}$ is called a split Keller's $C_c^r$-Fr\'echet submanifold modeled on $\fs{F}_1$ ($1 \leq r \leq k$), if for any $p \in S$ there exists a Keller's $ C_c^r $-diffeomorphism  $\varphi\colon U \to V = W \times O \subseteq \fs{F}_1 \times \fs{F}_2$ of $\fs{M}$ such that
\(
\varphi(S \cap U) = W \times \set{\zero{\fs{F}_2}}.
\)
The inclusion $\iota\colon S \hookrightarrow \fs{M}$ is then   a Keller's $C_c^r$-immersion, and $S$ inherits a Fr\'echet manifold structure of class  Keller's $C_c^r$.

A subset $\TB{D} \subseteq \TB{M}$ is called a \textit{distribution} on  $\fs{M}$, if for every point $m \in \fs{M}$, the set $\TB{D}_m \coloneqq  \TB{D} \cap \mathrm{T}_m \fs{M}$ is a subspace of $\mathrm{T}_m \fs{M}$. We focus on distributions that are split tangent subbundles (subbundles of the tangent bundle) of $\TB{M}$. A tangent subbundle \( \TB{D} \) over \( \fs{M} \) is called  integrable at \( m \in \fs{M} \) if there exists a submanifold \( S \subseteq \fs{M} \) containing \( m \) such that the inclusion \( \iota\colon S \hookrightarrow \fs{M} \) induces an isomorphism 
from \( \mathrm{T}_s S \) onto the fiber \( \TB{D}_s \), 
\( \mathrm{T}_s \iota \colon \mathrm{T}_s S \xrightarrow{\cong} \TB{D}_s \), for every \( s \in S \). We say \( \TB{D} \) is integrable if this property holds globally. From the functorial properties of vector fields and their behavior under tangent maps, it follows that any integrable subbundle must be \textit{involutive}. This requirement is formalized as the following condition:

\begin{enumerate}[label={\textup{(I)}}, ref=I]
	\item\label{itm:fr.1} The tangent subbundle \( \TB{D} \) is closed under the Lie bracket. That is, for any local sections \( \vf{V}, \vf{W} \) of \( \TB{D} \), the commutator \( [\vf{V}, \vf{W}] \) is also a section of \( \TB{D} \).
\end{enumerate}
However, satisfying this condition alone does not imply that a tangent subbundle is integrable in general. This is a well-known obstacle in Fréchet geometry, where the failure of the classical Picard-Lindelöf theorem prevents the  existence of flows for involutive distributions. Consequently, we impose a supplementary sufficient condition which is centered on the well-posedness of the associated variational problem to obtain integrability.
We now provide the local representation of the involutivity condition \ref{itm:fr.1} and describe the specific analytic form of the imposed well-posedness requirement.

Consider a product \( U \times V \) of open subsets of Fr\'echet spaces \( \fs{F} \) and \( \fs{E} \). The tangent bundle \( \mathrm{T}(U \times V) \) admits a natural decomposition as a direct sum. More precisely, for each point \( (x, y) \in U \times V \), we have the canonical isomorphism
\[
\mathrm{T}_{(x,y)}(U \times V) \cong \mathrm{T}_x(U) \times \mathrm{T}_y(V) \cong \fs{F} \times \fs{E}.
\]
It follows that the collection of fibers \( \fs{F} \times \{\mathbf{0}_{\fs{E}}\} \) forms a subbundle, denoted by \( \mathrm{T}_1(U \times V) \), which is called the first factor of the tangent bundle. By defining \( \mathrm{T}_2(U \times V) \) analogously, we obtain the Whitney sum decomposition
\[
\mathrm{T}(U \times V) = \mathrm{T}_1(U \times V) \oplus \mathrm{T}_2(U \times V).
\]
A subbundle \( \TB{D} \) of \( \mathrm{T}\fs{M} \) is {integrable} at a point \( m \in \fs{M} \) if there exists an open neighborhood \( W \) of \( m \) and a Keller's \( C_c^p \)-isomorphism \( \phi\colon U \times V \to W \) such that the tangent map \( \mathrm{T}\phi \) identifies \( \mathrm{T}_1(U \times V) \) with the restricted subbundle \( \TB{D}|_W \). Specifically, the composition 
\[
\mathrm{T}_1(U \times V) \overset{\text{inc.}}{\hookrightarrow}  \mathrm{T}(U \times V) \xrightarrow{\mathrm{T}\phi} \mathrm{T}(W)
\]
induces a bundle isomorphism of \( \mathrm{T}_1(U \times V) \) onto \( \TB{D}|_W \). That is, the following diagram is commutative 
	\begin{equation*}
		\begin{tikzcd}[column sep=3pc, row sep=3pc]
			\mathrm{T}_1(U \times V)
			\arrow[r, hook, "\text{inc}."]
			\arrow[d, "\cong", "\mathrm{T}\phi|_{\mathrm{T}_1}"']
			&
			\mathrm{T}(U \times V)
			\arrow[d, "\mathrm{T}\phi", "\cong"']
			\\
			\TB{D}|_W
			\arrow[r, hook, "\text{inc.}"]   % <-- added "inc." here
			\arrow[d, "\pi"']
			&
			\mathrm{T}W
			\arrow[d, "\pi"]
			\\
			U \times V
			\arrow[r, "\phi"']
			&
			W
		\end{tikzcd}
	\end{equation*}
 Alternatively, denote by $\phi_y\colon U \to W$ the map given by $\phi_y(x) = \phi(x, y)$, the integrability condition is satisfied if $\mathrm{T}_x\phi_y$ induces an isomorphism from $\fs{F}$ onto $\TB{D}_{\phi(x,y)}$ for all $(x, y) \in U \times V$. We note that within this local product structure, $\mathrm{T}_x\phi_y$ is precisely the partial derivative $\mathrm{D}_1\phi(x, y)$.

Given a subbundle \( \TB{D} \subseteq \mathrm{T}\fs{M} \), there exists an open neighborhood \( U \times V \) of \( (x_0, y_0) \) such that \( \TB{D} \) is represented by the exact sequence
\[
\mathbf{0}_{U \times V} \to U \times V \times \fs{F} \xrightarrow{h} U \times V \times \fs{F} \times \fs{E}.
\]
At the base point $(x_0, y_0)$, the map $h(x_0, y_0)\colon \fs{F} \to \fs{F} \times \fs{E}$ coincides with the canonical embedding from $\fs{F}$ onto $\fs{F} \times \{\zero{\fs{E}}\}$. For an arbitrary point $(x, y) \in U \times V$, the map $h(x, y)$ admits two components, $h_1(x, y)$ and $h_2(x, y)$, into $\fs{F}$ and $\fs{E}$ respectively. By applying a suitable automorphism to $U \times V \times \fs{F}$ if necessary, we may assume without loss of generality that $h_1(x, y) = \mathrm{id}_{\fs{F}}$. By setting $f(x, y) \coloneqq h_2(x, y)$, we obtain a morphism
\[
\eu{F}\colon U \times V \to \mathcal{L}_c(\fs{F}, \fs{E})
\]
of class Keller's $C_c^{p-1}$ which characterizes the subbundle completely over the neighborhood $U \times V$.
We now interpret Condition \ref{itm:fr.1} within the context of the present situation. Let $\vf{V}\colon U \times V \to \fs{F} \times \fs{E}$ be the local representation of a vector field over $U \times V$, and let $V_1$ and $V_2$ be its projections onto $\fs{F}$ and $\fs{E}$, respectively. The vector field $\vf{V}$ lies in the subbundle $\TB{D}$ if and only if
\(
V_2(x, y) = \eu{F}(x, y)(V_1(x, y))
\)
for all $(x, y) \in U \times V$. In other words, $\vf{V} \in \TB{D}$ if and only if $\vf{V}$ is of the form
\[
\vf{V}(x, y) = (V_1(x, y), \eu{F}(x, y)(V_1(x, y)))
\]
for some morphism $V_1\colon U \times V \to \fs{F}$ of class Keller's $C_c^{p-1}$. We shall also write this condition symbolically as
\begin{equation*}
	\vf{V} = (V_1, \eu{F}(V_1)).
\end{equation*}
If $\vf{V}$ and $\vf{W}$ are local representations of vector fields over $U \times V$ belonging to $\TB{D}$, then the local definition of the Lie bracket implies that $[\vf{V}, \vf{W}]$ lies in the subbundle $\TB{D}$ if and only if
\begin{equation*}
	\mathrm{D}\eu{F}(x, y)(\vf{V}(x, y))(W_1(x, y)) = \mathrm{D}\eu{F}(x, y)(\vf{W}(x, y))(V_1(x, y)),
\end{equation*}
which we write as 
\begin{equation}\label{eq:fr1}
	\mathrm{D}\eu{F}(\vf{V})(W_1) = \mathrm{D}\eu{F}(\vf{W})(V_1).
\end{equation}
Let \( \mathcal{U}(\zero{\fs{F}}, n, \epsilon) = \Set{z \in \fs{F}}{\snorm[\fs{F}, 1]{z} < \epsilon, \dots, \snorm[\fs{F}, n]{z} < \epsilon} \) be a neighborhood of the origin \( \zero{\fs{F}} \) in \( \fs{F} \), for a sufficiently small \( \epsilon > 0 \). 
Let \( x_0 \in U \subseteq \fs{F} \) and \( y_0 \in V \subseteq \fs{E} \). We define the mapping 
\[
\upphi(t; y,z) = \eu{F}(x_0 + tz, y)(z),
\]
where \( z \in \mathcal{U} \) and \( y \in V \). To apply Theorem \ref{th:ivp}, we identify the theorem's state space with \( \fs{E} \) and its parameter space with \( \fs{F} \). Let \( \eu{I} \in \bl{AU}(\mathbb{R} \times \fs{E} \times \fs{F}, [0,\infty)) \) be a functional and assume that  Condition \ref{itm:ivp.1} holds. Specifically, for any \( \psi \in \fs{C}^2([-1,1], \fs{E}) \), \(a>0\), and each fixed \( s \in [-1,1] \), the mapping
\begin{gather*}
	\eu{J}_{\psi,s}\colon \mathbb{R} \times \fs{E} \times \fs{F} \to [0,\infty) \\
	\eu{J}_{\psi,s}(a, f, z) = \eu{I}\big(\psi'(s) - a\upphi(t_0 + as; z, \psi(s) + f)\big)
\end{gather*}
satisfies the PS-condition at any level if \( \eu{I} \) is a Keller's \( C_c^1 \)-functional, or the Chang PS-condition if \( \eu{I} \) is locally Lipschitz. Then, by Theorem \ref{th:ivp}, there exists a constant \(0 <b < a \) such that for each direction \( z \in \mathcal{U} \), there is a unique solution \( \eu{Y} = \eu{Y}(t; z) \in \fs{C}^2([t_0-b, t_0+b], \fs{E}) \) to the initial value problem
\begin{equation*}
	\begin{cases}
		\eu{Y}'(t) = \upphi(t; z, \eu{Y}(t)), \quad \forall t \in [t_0-a, t_0+a] \\
		\eu{Y}(t_0) = y.
	\end{cases}
\end{equation*}
Furthermore,
\(
D_1 	\eu{Y}(t; z, y) = \upphi(tz, \eu{Y}(t; z, y))  (z).
\)
By the conclusion of Theorem \ref{th:ivp}, the mapping \( (t, z) \mapsto \eu{Y}(t; z) \) is of class Keller's \( C_c^1 \). By performing a change of variables of the type \( t = \alpha r \) and \( z = \alpha^{-1}x \) for a small positive \( \alpha \), we may assume that the interval of existence contains \( 1 \), provided \( \epsilon \) is sufficiently small. We keep \( y \) fixed and write \( \eu{Y}(t; z) \). The differential equation is then expressed by
\begin{equation*}
	\mathrm{D}_1 \eu{Y}(t; z) = \eu{F}(x_0 + tz, \eu{Y}(t; z))(z).
\end{equation*}
We aim to prove that there exists an open neighborhood \( U_0 \) of \( x_0 \) and a unique morphism \( \eu{Z}\colon U_0 \to V \) such that
\[
\mathrm{D} \eu{Z}(x) = \eu{F}(x, \eu{Z}(x)) \quad \text{and} \quad \eu{Z}(x_0) = y_0.
\]
Consider  \( \eu{Z}(x) = \eu{Y}(1, x - x_0) \). It suffices to show that
\begin{equation}\label{eq:variational_id}
	\mathrm{D}_2 \eu{Y}(t, z)(v) = t \eu{F}(x_0 + tz, \eu{Y}(t, z))(v)
\end{equation}
for any vector \( v \in \fs{F} \). By Corollary \ref{lem:variational_parameter}, for any vector \( v \in \fs{F} \), we have
\begin{align*}
	\mathrm{D}_1 \mathrm{D}_2 \eu{Y}(t, z)(v) &= t \mathrm{D}_1 \eu{F}(x_0 + tz, \eu{Y}(t, z))(v)(z) \\
	&\quad + \mathrm{D}_2 \eu{F}(x_0 + tz, \eu{Y}(t, z))(\mathrm{D}_2 \eu{Y}(t, z)(v))(z) \\
	&\quad + \eu{F}(x_0 + tz, \eu{Y}(t, z))(v).
\end{align*}
We define the auxiliary function \( k(t) \coloneqq \mathrm{D}_2 \eu{Y}(t, z)(v) - t \eu{F}(x_0 + tz, \eu{Y}(t, z))(v) \), and observe that \( k(0) = \zero{\fs{E}} \). Recall that the solution \( \eu{Y}(t; z) \) satisfies the  equation \( \mathrm{D}_1 \eu{Y}(t, z) = \eu{F}(x_0 + tz, \eu{Y}(t, z))(z) \). By differentiating this relation with respect to the parameter \( z \) in the direction \( v \), we obtain 
\begin{equation}\label{eq:variational_derivation}
	\mathrm{D}_1 \mathrm{D}_2 \eu{Y}(t, z)(v) = t \mathrm{D}_1 \eu{F}(x_0 + tz, \eu{Y})(v)(z) + \mathrm{D}_2 \eu{F}(x_0 + tz, \eu{Y})(\mathrm{D}_2 \eu{Y}(v))(z) + \eu{F}(x_0 + tz, \eu{Y})(v).
\end{equation}
By differentiating \( k(t) \) with respect to \( t \), substituting the identity \eqref{eq:variational_derivation}, and applying the product rule to the second term of \( k(t) \), we obtain
\begin{align*}
	k'(t) &= \mathrm{D}_1 \mathrm{D}_2 \eu{Y}(t, z)(v) - \frac{d}{dt} \left[ t \eu{F}(x_0 + tz, \eu{Y}(t, z))(v) \right] \\
	&= \left[ t \mathrm{D}_1 \eu{F}(x_0+tz, \eu{Y})(v)(z) + \mathrm{D}_2 \eu{F}(x_0+tz, \eu{Y})(\mathrm{D}_2 \eu{Y}(v))(z) + \eu{F}(x_0+tz, \eu{Y})(v) \right] \\
	&\quad - \left[ \eu{F}(x_0+tz, \eu{Y})(v) + t \mathrm{D}_1 \eu{F}(x_0+tz, \eu{Y})(z)(v) + t \mathrm{D}_2 \eu{F}(x_0+tz, \eu{Y})(\mathrm{D}_1 \eu{Y})(v) \right].
\end{align*}
Now, substituting the relations \(\mathrm{D}_1 \eu{Y} = \eu{F}(z)\) and \(\mathrm{D}_2 \eu{Y}(v) = k(t) + t \eu{F}(v)\) into the expression above yields
\begin{align*}
	k'(t) &= t \mathrm{D}_1 \eu{F}(v)(z) + \mathrm{D}_2 \eu{F}(k(t))(z) + t \mathrm{D}_2 \eu{F}(\eu{F}(v))(z) \\
	&\quad - t \mathrm{D}_1 \eu{F}(z)(v) - t \mathrm{D}_2 \eu{F}(\eu{F}(z))(v) \\
	&= \mathrm{D}_2 \eu{F}(k(t))(z) + t \left[ \mathrm{D}_1 \eu{F}(v)(z) + \mathrm{D}_2 \eu{F}(\eu{F}(v))(z) - \left( \mathrm{D}_1 \eu{F}(z)(v) + \mathrm{D}_2 \eu{F}(\eu{F}(z))(v) \right) \right].
\end{align*}
The term within the square brackets vanishes identically by  \eqref{eq:fr1}. More precisely, for the local vector fields \(\vf{V} = (v, \eu{F}(v))\) and \(\vf{W} = (z, \eu{F}(z))\), the requirement \([\vf{V}, \vf{W}] \in \TB{D}\) implies the symmetry
\[
\mathrm{D}_1 \eu{F}(v)(z) + \mathrm{D}_2 \eu{F}(\eu{F}(v))(z) = \mathrm{D}_1 \eu{F}(z)(v) + \mathrm{D}_2 \eu{F}(\eu{F}(z))(v).
\]
Consequently, \(k(t)\) satisfies the linear initial value problem
\begin{equation}\label{eq:fr5_final}
	\begin{cases}
		k'(t) = \mathrm{D}_2 \eu{F}(x_0 + tz, \eu{Y}(t, z))(k(t))(z) \\
		k(0) = \zero{\fs{E}}.
	\end{cases}
\end{equation}
By the uniqueness of solutions guaranteed by Corollary \ref{cor:gift}, we conclude that \(k(t) = \zero{\fs{E}}\) for all \(t\) in the interval of existence. This establishes the identity \(\mathrm{D}_2 \eu{Y}(t, z)(v) = t \eu{F}(x_0 + tz, \eu{Y}(t, z))(v)\), which completes the proof. The results established through the preceding local analysis are summarized in the following theorem.

\begin{theorem} \label{thm:local_frobenius}
	Let \( U \subseteq \fs{F} \) and \( V \subseteq \fs{E} \) be open subsets of Fr\'echet spaces. Let \( \eu{F}\colon U \times V \to \mathcal{L}_c(\fs{F}, \fs{E}) \) be a Keller's \( C_c^p \)-mapping, \( p \geq 2 \). Assume that
	\begin{enumerate}[label=\textup{(\alph*)}, ref=\thetheorem\textup{(\alph*)}]
		\item \textup{(Integrability)} For any \( (x, y) \in U \times V \) and all \( v, z \in \fs{F} \), we have
		\[ \mathrm{D}\eu{F}(x, y)(v, \eu{F}(x, y)v)(z) = \mathrm{D}\eu{F}(x, y)(z, \eu{F}(x, y)z)(v). \]
		\item \textup{(Well-posedness)} \label{con;well} The mapping \( \upphi(t; z, y) = \eu{F}(x_0 + tz, y)(z) \) satisfies {Condition \ref{itm:ivp.1}} for a functional \( \eu{I} \in \bl{AU}(\mathbb{R} \times \fs{F} \times \fs{E}, [0,\infty)) \).
	\end{enumerate}
	Then, for any \( (x_0, y_0) \in U \times V \), there exist neighborhoods \( U_0\) of \(x_0\) and \(V_0 \) of \(y_0\), and a unique morphism \( \eu{Z}\colon U_0 \times V_0 \to \fs{E} \) such that
	\begin{equation}
		\mathrm{D}_1 \eu{Z}(x, y) = \eu{F}(x, \eu{Z}(x, y)) \quad \text{and} \quad \eu{Z}(x_0, y) = y
	\end{equation}
	for all \( (x, y) \in U_0 \times V_0 \).
\end{theorem}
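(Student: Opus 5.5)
The plan is to make the preceding local analysis into a proof, keeping track of the dependence on the initial value \(y\) as well as on \(x\). Fix \((x_0,y_0)\in U\times V\) and apply Theorem \ref{th:ivp} to the IVP
\[
\eu{Y}'(t)=\eu{F}(x_0+tz,\eu{Y}(t))(z),\qquad \eu{Y}(0)=y .
\]
Here the state space is \(\fs{E}\) and the combined datum is \((y,z)\in\fs{E}\times\fs{F}\). Hypothesis (b) supplies Condition \ref{itm:ivp.1}, so there is \(b>0\) and a unique solution \(\eu{Y}(t;y,z)\in\fs{C}^2([-b,b],\fs{E})\). By Theorem \ref{th:ivp}, the map \((t,y,z)\mapsto\eu{Y}(t;y,z)\) is Keller's \(C_c^1\). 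To keep \(x_0+tz\in U\) and \(\eu{Y}\in V\), I restrict \(z\) to a small basic neighbourhood \(\mathcal{U}(\zero{\fs{F}},n,\epsilon)\) and \(y\) to a neighbourhood \(V_0\) of \(y_0\), using continuity of \(\eu{Y}\) at \((0,y_0,0)\). Then I rescale by \(t=\alpha r\), \(z=\alpha^{-1}x\). The rescaled field is \(\eu{F}(x_0+rx,\cdot)(x)\), which has the same form, so the existence interval contains \(1\) once \(\epsilon\) is shrunk. Finally I set \(U_0=x_0+\mathcal{U}\) and
\[
\eu{Z}(x,y)\coloneqq\eu{Y}(1;y,x-x_0).
\]
Then \(\eu{Z}(x_0,y)=\eu{Y}(1;y,0)=y\), because the constant curve solves the IVP when \(z=0\) and the solution is unique.

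The main step is the identity \eqref{eq:variational_id},
\[
\mathrm{D}_z\eu{Y}(t;y,z)(v)=t\,\eu{F}(x_0+tz,\eu{Y})(v),
\]
for fixed \(y\) and all \(v\in\fs{F}\). Evaluating it at \(t=1\) gives \(\mathrm{D}_1\eu{Z}(x,y)=\eu{F}(x,\eu{Z}(x,y))\). For the identity, I use Corollary \ref{lem:variational_parameter} with \(z\) as the parameter to get the variational equation \eqref{eq:variational_derivation}. I then form \(k(t)=\mathrm{D}_z\eu{Y}(v)-t\eu{F}(v)\) and compute \(k'\). Hypothesis (a), the local form \eqref{eq:fr1} of involutivity, cancels the inhomogeneous bracket term. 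This leaves the linear IVP \eqref{eq:fr5_final} with \(k(0)=0\).

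The step I expect to be the main obstacle is concluding \(k\equiv0\). Corollary \ref{cor:gift} gives uniqueness only for IVPs whose right-hand side satisfies the hypotheses of Theorem \ref{th:ivp}. So I must check that the linear field
\[
(t,k)\mapsto \mathrm{D}_2\eu{F}(x_0+tz,\eu{Y}(t))(k)(z)
\]
fits that framework. I would first treat \((t,k)\mapsto(\eu{Y}(t),k)\) as an augmented system, so that the well-posedness hypothesis (b) transfers to it. Failing that, I would read the uniqueness for this linearised equation as part of what Condition W is meant to encode, and state it explicitly as an added assumption. The zero curve is a solution, so uniqueness forces \(k\equiv0\).

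It remains to check regularity and uniqueness of \(\eu{Z}\). Regularity follows from the \(C_c^1\) dependence on \((y,z)\), with higher smoothness coming from \(p\ge2\) and the smoothness of \(\eu{F}\). For uniqueness, suppose \(\tilde{\eu{Z}}\) also satisfies both conditions. Then \(t\mapsto\tilde{\eu{Z}}(x_0+tz,y)\) solves the same IVP by the chain rule, so Corollary \ref{cor:gift} gives \(\tilde{\eu{Z}}=\eu{Z}\) on the star-shaped set \(U_0\).
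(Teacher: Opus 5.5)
Your route is the paper's: the same IVP with $z$ as parameter, the same rescaling so that $t=1$ lies in the existence interval, the same $\eu{Z}(x,y)=\eu{Y}(1;y,x-x_0)$, and the same auxiliary $k$ whose inhomogeneous term is cancelled by hypothesis (a). The gap is the one you flag yourself, and the proposal does not close it.

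Corollary \ref{cor:gift} gives uniqueness only for solutions of $\varphi'=\upphi_e(t,\varphi)$ with the particular $\upphi$ for which Condition \ref{itm:ivp.1} is assumed. By contrast, \eqref{eq:fr5_final} has the different right-hand side $(t,k)\mapsto \mathrm{D}_2\eu{F}(x_0+tz,\eu{Y}(t))(k)(z)$.

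Your augmented-system idea does not transfer (b). To run Theorem \ref{th:ivp} for the field $(\eu{Y},k)\mapsto\bigl(\eu{F}(x_0+tz,\eu{Y})(z),\,\mathrm{D}_2\eu{F}(x_0+tz,\eu{Y})(k)(z)\bigr)$ on $\fs{E}\times\fs{E}$, you need a functional in $\bl{AU}$ on the enlarged space. You also need the Palais--Smale property of the corresponding $\eu{J}_{\psi,s}$, and hypothesis (b) says nothing about the second component.

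Nor is the step automatic. Without a Gronwall inequality, linear IVPs in Fr\'echet spaces can have nonzero solutions with zero initial value. For example, on $\mathbb{R}^{\mathbb{N}}$ the system $k_n'=k_{n+1}$ is solved by $k_n=g^{(n)}$ for any smooth $g\not\equiv 0$ that is flat at $0$. So your fallback, assuming uniqueness for the linearised IVP, is a genuine strengthening of (b) and of Condition W, not a verification of the theorem as stated.

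For calibration: the paper's own argument disposes of this step by citing Corollary \ref{cor:gift} for \eqref{eq:fr5_final}, without checking its hypotheses for the linear field. So the paper contains no idea you have overlooked, and your diagnosis of where the argument is incomplete is accurate.

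The rest of your proposal is sound and in places more complete than the paper's write-up. You track the dependence on $y$, which the statement requires but the paper suppresses, and you check $\eu{Z}(x_0,y)=y$. Your uniqueness argument for $\eu{Z}$ along rays in the convex set $U_0$ legitimately uses Corollary \ref{cor:gift}, because there the field is the original $\upphi$; the paper never argues uniqueness at all. Only your remark about higher smoothness is unsupported: Theorem \ref{th:ivp} yields just $C_c^1$ dependence. The statement does not need more.
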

For ease of future reference and to enhance the readability of the subsequent results, we  extract the well-posedness requirement of Theorem \ref{con;well} into the following condition.

\begin{definition}[Local Well-Posedness Condition W] \label{def:condition_W}
	Let \( \fs{M} \) be a Keller's \( C_c^k \)-Fr\'echet manifold (\( k \geq 2 \)) modeled on \( \fs{F} = \fs{F}_1 \oplus \fs{F}_2 \), and let \( \TB{D} \subseteq \mathrm{T}\fs{M} \) be a split subbundle modeled on the Fr\'echet space \( \fs{F}_1 \). There exists a local product chart \( U \times V \subseteq \fs{F}_1 \times \fs{F}_2 \) and a Keller's \( C_c^{k-1} \)-morphism 
	\[
	\eu{F} \colon U \times V \to \mathcal{L}_c(\fs{F}_1, \fs{F}_2)
	\]
	such that \( \TB{D} \) is locally represented as the graph of \( \eu{F} \).  That is, a vector \( \vf{V} = (V_1, V_2) \in \fs{F}_1 \oplus \fs{F}_2 \) belongs to \( \TB{D}_{(x, y)} \) if and only if \( V_2 = \eu{F}(x, y)V_1 \). We say \( \TB{D} \) satisfies Condition W if for every point \( (x_0, y_0) \in U \times V \), the mapping 
	\[
	\upphi(t; y,z) \coloneqq \eu{F}(x_0 + tz, y)(z)
	\]
	satisfies Condition \ref{itm:ivp.1} for a functional \( \eu{I} \in \bl{AU}(\mathbb{R} \times \fs{F}_1 \times \fs{F}_2, [0,\infty)) \). 
\end{definition}

Let $\TB{D} \subseteq \TB{M}$ be a split tangent subbundle with typical fiber  $\fs{F}_1$. A connected split  Keller's $C_c^k$-submanifold $N \subseteq \fs{M}$ is called an \textit{integral manifold} for $\TB{D}$ if 
\(
\mathrm{T}_p N = \TB{D}_p 
\)
for every $p \in N$. Given $m_0 \in \fs{M}$, an integral manifold $N$ containing $m_0$ is called \textit{maximal} if every other connected integral manifold $L$ of $\TB{D}$ containing $m_0$ is a subset of $N$ ($L \subseteq N$) and the inclusion $L \hookrightarrow N$ is of class  Keller's $ C_c^k$.

\begin{definition}
	Let \(\fs{M}\) be a Keller's \(C_c^k\)-Fr\'echet manifold, \(k \geq 2\), modeled on \(\fs{F} = \fs{F}_1 \oplus \fs{F}_2\).	Let $\TB{D} \subseteq \TB{M}$ be a split subbundle with typical fiber  $\fs{F}_1$. A chart $\psi\colon U \to W \times O$ of $\fs{M}$ is called a {Frobenius chart} for $\TB{D}$ if for every fixed ${\hat{y}} \in O$, the slice
	\[
	S_{\hat{y}} \coloneqq \psi^{-1}(W \times \set{\hat{y}})
	\]
	is an integral manifold of $\TB{D}$. If $\fs{M}$ admits an atlas consisting of Frobenius charts, then $\TB{D}$ is called a Frobenius distribution.
\end{definition}

\begin{proposition}[Existence of Frobenius Charts]\label{prop:efc}
	Let \(\fs{M}\) be a Keller's \(C_c^k\)-Fr\'echet manifold, \(k \geq 2\), modeled on \(\fs{F} = \fs{F}_1 \oplus \fs{F}_2\). Let \(\TB{D} \subseteq \TB{M}\) be a split tangent subbundle with typical fiber  \(\fs{F}_1\). If \(\TB{D}\) satisfies the involutivity condition \ref{itm:fr.1} and  the local well-posedness condition
	W in Definition~\ref{def:condition_W}, then for every \(m \in \fs{M}\), there exists a Frobenius chart \((\eu{U}, \psi)\) around \(m\).
\end{proposition}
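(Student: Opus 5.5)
Fix \(m \in \fs{M}\) and choose, by Condition W, a local product chart \(\varphi\colon \eu{U}_1 \to U \times V \subseteq \fs{F}_1 \times \fs{F}_2\) around \(m\), with \(\varphi(m) = (x_0, y_0)\), in which \(\TB{D}\) is the graph of a Keller's \(C_c^{k-1}\)-morphism \(\eu{F}\colon U \times V \to \mathcal{L}_c(\fs{F}_1, \fs{F}_2)\). The plan is to apply Theorem~\ref{thm:local_frobenius} to \(\eu{F}\), to obtain the map \(\eu{Z}\), and to use it to straighten the leaves.

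First I translate the hypotheses. By the local computation preceding Theorem~\ref{thm:local_frobenius}, the involutivity condition \ref{itm:fr.1}, applied to the local sections \(\vf{V} = (v, \eu{F}(v))\) and \(\vf{W} = (z, \eu{F}(z))\) with constant \(v, z\), is exactly the identity \eqref{eq:fr1}. This is hypothesis (a) of Theorem~\ref{thm:local_frobenius}. Condition W supplies hypothesis (b) at \((x_0, y_0)\). Theorem~\ref{thm:local_frobenius} then gives neighborhoods \(U_0 \ni x_0\) and \(V_0 \ni y_0\) and a map \(\eu{Z}\colon U_0 \times V_0 \to \fs{F}_2\) with \(\mathrm{D}_1\eu{Z}(x,y) = \eu{F}(x, \eu{Z}(x,y))\) and \(\eu{Z}(x_0, y) = y\). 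Its regularity (at least Keller's \(C_c^1\), jointly in \((x,y)\)) comes from the final clause of Theorem~\ref{th:ivp}, since \(\eu{Z}(x,y) = \eu{Y}(1; x - x_0, y)\) and the solution depends smoothly on the initial datum \(y\).

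Next I define \(\Phi\colon U_0 \times V_0 \to \fs{F}_1 \times \fs{F}_2\) by \(\Phi(x, y) = (x, \eu{Z}(x, y))\). For fixed \(\hat{y}\), the image of the slice \(x \mapsto \Phi(x, \hat{y})\) is the graph of \(\eu{Z}(\cdot, \hat{y})\). Its tangent space at each point is \(\{(v, \mathrm{D}_1\eu{Z}(x,\hat{y})v)\} = \{(v, \eu{F}(x, \eu{Z})v)\}\), which is precisely \(\TB{D}\). So each slice is an integral manifold; it is connected once \(U_0\) is taken to be a convex basic neighborhood. The desired chart is \(\psi = \Phi^{-1} \circ \varphi\) on \(\eu{U} = \varphi^{-1}(\Phi(U_0' \times V_0'))\), and then \(\psi^{-1}(W \times \{\hat{y}\})\) is such a slice.

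The main obstacle is showing that \(\Phi\) is a local diffeomorphism, since no inverse function theorem is available in Fréchet spaces. I will avoid one by inverting \(\Phi\) explicitly. Given \((x, w)\), I need the \(y\) with \(\eu{Z}(x, y) = w\). This \(y\) is obtained by flowing backward: solve the IVP along \(t \mapsto x_0 + t(x - x_0)\) from \(t = 1\) to \(t = 0\) with initial value \(w\), that is, the same equation with reversed time. Condition W at the base point \((x, w)\), or equivalently the time-reversed application of Theorem~\ref{th:ivp}, gives existence and \(C_c^1\)-dependence of this solution. Corollary~\ref{cor:gift} (uniqueness) shows that the backward flow inverts the forward flow. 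Hence \(\Phi^{-1}(x, w) = (x, \eu{Y}_{\mathrm{back}}(0; x, w))\) is \(C_c^1\), after shrinking the neighborhoods so that both flows stay inside \(U \times V\) up to time \(1\).

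Some loose ends remain. The regularity class will be whatever Theorem~\ref{th:ivp} provides, and I would record it honestly as \(C_c^1\), upgrading to higher order by differentiating the variational equation of Corollary~\ref{lem:variational_parameter} if needed. Compatibility of \(\psi\) with the atlas follows because it is a composition of \(\varphi\) with a bi-\(C_c^1\) map.
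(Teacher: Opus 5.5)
Your overall architecture matches the paper's. You take the graph chart from Condition W, obtain $\eu{Z}$ from Theorem~\ref{thm:local_frobenius}, set $\Phi(x,y)=(x,\eu{Z}(x,y))$ and $\psi=\Phi^{-1}\circ\varphi$, and read off the slice tangents from $\mathrm{D}_1\eu{Z}=\eu{F}(x,\eu{Z})$.

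You genuinely diverge on the local invertibility of $\Phi$. The paper computes $\mathrm{D}\upphi(x_0,y_0)$, notes that it is a triangular isomorphism with explicit inverse, and concludes that $\upphi$ is a local $C_c^k$-isomorphism. That inference is an inverse function theorem, which fails for Fr\'echet spaces in general: the exponential map of $\mathrm{Diff}(S^1)$ has derivative $\mathrm{Id}$ at $0$ but is not locally onto. You instead use that $\eu{Z}(x,\cdot)$ is the time-one map of an ODE along the segment from $x_0$ to $x$. Its inverse is then the time-one map of the reversed ODE, and Corollary~\ref{cor:gift} gives both inverse identities. This stays inside the existence, uniqueness and $C_c^1$-dependence machinery the paper actually has, at the cost of needing a second well-posed IVP. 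Your explicit check of hypothesis (a) and of connectedness of the slices (convex $U_0$) also fills in points the paper passes over.

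There are two caveats.

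First, ``Condition W at the base point $(x,w)$'' is not equivalent to the time-reversed problem as far as regularity is concerned. It produces the right solution, but with $x$ frozen as the base point, so it says nothing about differentiability in $x$. Joint $C_c^1$-dependence on $(x,w)$ needs Theorem~\ref{th:ivp} for the field $-\eu{F}(x_0+(1-t)z,y)(z)$ with parameter $z=x-x_0$. Equivalently, up to $a\mapsto -a$, it needs Condition~\ref{itm:ivp.1} for the original field at initial time $1$ rather than $0$. Definition~\ref{def:condition_W} does not literally state this, although the paper already assumes well-posedness at arbitrary initial times in the proof of Corollary~\ref{cor:gift}.

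Second, your chart is only $C_c^1$, whereas the definitions require a chart of the $C_c^k$-manifold and $C_c^k$ integral manifolds. The paper asserts $C_c^k$ without support, since Theorem~\ref{th:ivp} yields only $C_c^1$. Your suggested bootstrap through Corollary~\ref{lem:variational_parameter} would need well-posedness of the linear variational equations, which Condition W does not provide. So neither argument settles the regularity class.
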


\begin{proof}
	Let \( m \in \fs{M} \). Since \( \TB{D} \) is a split subbundle of \( \TB{M} \), there exists a local chart \( (U, \phi) \) around \( m \) such that \( \phi(U) = W \times O \subseteq \fs{F}_1 \times \fs{F}_2 \). In this chart, the distribution \( \TB{D} \) is represented locally by the graph of a Keller's \( C_c^k \)-morphism \( \eu{F}\colon W \times O \to \mathcal{L}_c(\fs{F}_1, \fs{F}_2) \).
	
	By Theorem \ref{thm:local_frobenius}, there exist open neighborhoods \( W_0 \subseteq W \) and \( O_0 \subseteq O \), and a unique morphism \( \eu{Z}\colon W_0 \times O_0 \to \fs{F}_2 \) such that
	\begin{equation}\label{eq:local_ode}
		\mathrm{D}_1 \eu{Z}(x, y) = \eu{F}(x, \eu{Z}(x, y)) \quad \text{and} \quad \eu{Z}(x_0, y) = y
	\end{equation}
	for all \( (x, y) \in W_0 \times O_0 \). Define the mapping \( \upphi\colon W_0 \times O_0 \to W \times O \) by \( \upphi(x, y) = (x, \eu{Z}(x, y)) \). Its derivative \( \mathrm{D}\upphi(x, y) \) applied to a vector \( (u, v) \in \fs{F}_1 \times \fs{F}_2\) is given by
	\[
	\mathrm{D}\upphi(x, y)  ((u, v)) = \big(u, \mathrm{D}_1 \eu{Z}(x, y)  (u) + \mathrm{D}_2 \eu{Z}(x, y)  (v)\big).
	\]
	By evaluating this at the base point \( (x_0, y_0) \), we note that since \( \eu{Z}(x_0, y) = y \) for all \( y \in O_0 \), it follows that \( \mathrm{D}_2 \eu{Z}(x_0, y_0) = \mathrm{id}_{\fs{F}_2} \). Additionally, \( \mathrm{D}_1 \eu{Z}(x_0, y_0) = \eu{F}(x_0, y_0) \). Therefore, the linear map \( \mathrm{D}\upphi(x_0, y_0) \) is given by
	\[
	\mathrm{D}\upphi(x_0, y_0) = \begin{pmatrix} \mathrm{id}_{\fs{F}_1} & \zero{\fs{F}_2} \\ \eu{F}(x_0, y_0) & \mathrm{id}_{\fs{F}_2} \end{pmatrix}.
	\]
	This operator is a linear isomorphism because its inverse is explicitly given by the continuous linear operator
	\[
	[\mathrm{D}\upphi(x_0, y_0)]^{-1} = \begin{pmatrix} \mathrm{id}_{\fs{F}_1} & \zero{\fs{F}_2} \\ -\eu{F}(x_0, y_0) & \mathrm{id}_{\fs{F}_2} \end{pmatrix}.
	\]
	Consequently, \( \upphi \) is a local Keller's \( C_c^k\)-isomorphism at \( (x_0, y_0) \). We now define the new chart \((\eu{U}, \psi)\) around \(m\) by 
	\(
	\psi \coloneqq \upphi^{-1} \circ \phi|_{\eu{U}},
	\)	where \(\eu{U} = \phi^{-1}(\upphi(W_0 \times O_0))\). This map is a composition of Keller's 	 \(C_c^k\)-diffeomorphisms and is therefore a  chart for \(\fs{M}\).
	To verify that \(\psi\) is a Frobenius chart, we examine the slices \(S_{\hat{y}} \coloneqq \psi^{-1}(W_0 \times \{\hat{y}\})\) for a fixed \(\hat{y} \in O_0\). By the definition of \(\psi\), we have
	\[
	\phi(S_{\hat{y}}) = \upphi(W_0 \times \{\hat{y}\}) = \{ (x, \eu{Z}(x, \hat{y})) \mid x \in W_0 \}.
	\]
	The tangent space of this slice at any point \(p \in S_{\hat{y}}\) (expressed in the \(\phi\)-chart) is the image of \(\fs{F}_1\) under the derivative of the mapping \(x \mapsto (x, \eu{Z}(x, \bar{y}))\), which is given by
	\[
	\mathrm{T}_{\phi(p)} \phi(S_{\hat{y}}) = \{ (v, \mathrm{D}_1 \eu{Z}(x, \hat{y}) (v)) \mid v \in \fs{F}_1 \}.
	\]
	By substituting the relation from \eqref{eq:local_ode}, we obtain
	\[
	\mathrm{T}_{\phi(p)} \phi(S_{\hat{y}}) = \{ (v, \eu{F}(x, \eu{Z}(x, \hat{y}))  (v)) \mid v \in \fs{F}_1 \},
	\]
	which is precisely the local representation of the fiber \(\TB{D}p\). Thus, \(\mathrm{T}_p S_{\hat{y}} = \TB{D}p\), which implies that each slice \(S_{\hat{y}}\) is an integral manifold of \(\TB{D}\). Therefore, \((\eu{U}, \psi)\) is a Frobenius chart.
\end{proof}
\begin{remark}\label{rem:1}
	We prove that the chart \(\psi = \upphi^{-1} \circ \phi\) satisfies \(\mathrm{T}\psi(\TB{D}|_{\eu{U}}) = \fs{F}_1 \times \{\zero{\fs{F}_2}\}\). Let \(p \in \eu{U}\) and let \((x, y) = \phi(p)\). By definition of the distribution in the chart \(\phi\), any tangent vector \(\omega \in \TB{D}_p\) is given by
	\begin{equation} \label{eq:dist_orig}
		\mathrm{T}_p\phi(\omega) = (v, \eu{F}(x, y)v) \in \fs{F}_1 \oplus \fs{F}_2, \quad \text{for some } v \in \fs{F}_1.
	\end{equation}
	Let \((x, \hat{y}) = \psi(p)\). By the chain rule, the tangent map of the change of coordinates is \(\mathrm{T}_p\psi = (\mathrm{D}\upphi{(x, \hat{y})})^{-1} \circ \mathrm{T}_p\phi\). The derivative of the map \(\upphi(x, \hat{y}) = (x, \eu{Z}(x, \hat{y}))\) is given by 
	\begin{equation*}
		\mathrm{D}\upphi(x, \hat{y}) = \begin{pmatrix} \id_{\fs{F}_1} & \zero{\fs{F}_2} \\ \mathrm{D}_1 \eu{Z}(x, \hat{y}) & \mathrm{D}_2 \eu{Z}(x, \hat{y}) \end{pmatrix}.
	\end{equation*}
	Thus the action of \(\mathrm{D}\upphi\) on a  vector \((v, \zero{\fs{F}_2})\) is as follows
	\begin{align*}
		\mathrm{D}\upphi(x, \hat{y})  \begin{pmatrix} v \\ \zero{\fs{F}2} \end{pmatrix} &= \begin{pmatrix} \id_{\fs{F}_1} & \zero{\fs{F}_2} \\ \mathrm{D}_1 \eu{Z}(x, \hat{y}) & \mathrm{D}_2 \eu{Z}(x, \hat{y}) \end{pmatrix} \begin{pmatrix} v \\ \zero{\fs{F}_2} \end{pmatrix} \\
		&= (v, \mathrm{D}_1 \eu{Z}(x, \hat{y})(v)).
	\end{align*}
	By  Theorem \ref{thm:local_frobenius}, \(\eu{Z}\) satisfies  \(\mathrm{D}_1 \eu{Z}(x, \hat{y}) = \eu{F}(x, \eu{Z}(x, \hat{y}))\). Since \(\phi(p) = \upphi(x, \hat{y}) = (x, \eu{Z}(x, \hat{y}))\), substituting this into the expression above yields
	\begin{equation*}
		\mathrm{D}\upphi(x, \hat{y})  ((v, \zero{\fs{F}_2})) = (v, \eu{F}(\phi(p))v).
	\end{equation*}
	By comparing this with Equation \eqref{eq:dist_orig}, we see that \(\mathrm{D}\upphi(x, \hat{y}) 
	((v, \zero{\fs{F}_2})) = \mathrm{T}_p\phi(\omega)\), and by multiplying by the inverse operator on both sides, we obtain
	\begin{equation*}
		(v, \zero{\fs{F}_2}) = (\mathrm{D}\upphi(x, \hat{y}))^{-1} \circ \mathrm{T}_p\phi(\omega) = \mathrm{T}_p\psi(\omega).
	\end{equation*}
	Since this holds for all \(\omega \in \TB{D}_p\), it follows that \(\mathrm{T}_p\psi(\TB{D}_p) = \fs{F}_1 \times \{\zero{\fs{F}_2}\}\).
\end{remark}
\begin{theorem}[Global Frobenius Theorem for Fr\'echet Manifolds] \label{thm:global_frobenius}
	Let \(\fs{M}\) be a Keller's \(C_c^k\)-Fr\'echet manifold, \(k \geq 2\), modeled on \(\fs{F} = \fs{F}_1 \oplus \fs{F}_2\). Let \(\TB{D} \subseteq \TB{M}\) be a split tangent subbundle with typical fiber \(\fs{F}_1\). If \(\TB{D}\) satisfies the involutivity condition \ref{itm:fr.1} and the local well-posedness condition
	W in Definition~\ref{def:condition_W}, then \( \fs{M} \) admits an atlas of Frobenius charts for \( \TB{D} \), and for every point \( m_0 \in \fs{M} \), there exists a unique maximal connected integral manifold \( L_{m_0} \) passing through \( m_0 \).
\end{theorem}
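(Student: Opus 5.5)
The plan has two parts. First, the atlas of Frobenius charts comes directly from Proposition~\ref{prop:efc}. Second, the maximal integral manifold is built by the classical leaf construction, using Remark~\ref{rem:1} to control how integral manifolds sit inside Frobenius charts.

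\textbf{Atlas.} By Proposition~\ref{prop:efc}, each \(m\in\fs{M}\) has a Frobenius chart \((\eu{U}_m,\psi_m)\). Since the \(\eu{U}_m\) cover \(\fs{M}\), and each \(\psi_m=\upphi^{-1}\circ\phi\) is \(C_c^k\)-compatible with the original atlas, the family \(\{(\eu{U}_m,\psi_m)\}\) is an atlas. After shrinking, we may assume every chart image is a product \(W\times O\) with \(W\) connected, so that the plaques \(S_{\hat y}\) are connected.

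\textbf{Local uniqueness of integral manifolds.} The key step is the following claim. Let \(L\) be a connected integral manifold and \((\eu{U},\psi)\) a Frobenius chart. Then every connected component of \(L\cap\eu{U}\) lies in a single plaque \(S_{\hat y}\), and the inclusion into that plaque is \(C_c^k\).

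To prove it, let \(\gamma\) be a \(C^1\) curve in \(L\cap\eu{U}\) and write \(\psi\circ\gamma=(\gamma_1,\gamma_2)\). By Remark~\ref{rem:1}, \(\mathrm{T}\psi(\TB{D})=\fs{F}_1\times\{\zero{\fs{F}_2}\}\), so \(\gamma_2'\equiv \zero{\fs{F}_2}\). In a Fr\'echet space a curve with vanishing derivative is constant: apply continuous linear functionals and use Hahn--Banach. Hence \(\gamma_2\) is constant. Components of an open subset of the manifold \(L\) are path-connected by \(C^1\) paths, so the \(\fs{F}_2\)-coordinate is locally constant on \(L\cap\eu{U}\), which proves the claim.

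Smoothness of the inclusion follows because, in the \(\psi\)-chart, the inclusion is \(x\mapsto(x,\hat y)\) composed with the \(\fs{F}_1\)-component, and that component is \(C_c^k\). The point to check here is that \(\psi_1|_{L}\) is a local diffeomorphism onto an open subset of \(W\). Its derivative is an isomorphism \(\mathrm{T}L=\TB{D}\to\fs{F}_1\). A local inverse then comes from comparing with the split submanifold charts of \(L\): both \(L\) and \(S_{\hat y}\) are split submanifolds with the same tangent spaces, and we have already shown \(L\cap\eu{U}\) is locally contained in \(S_{\hat y}\). This containment, rather than an inverse function theorem, is what gives openness of \(L\) in \(S_{\hat y}\) in the plaque topology. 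A consequence is the \emph{identity principle}: two connected integral manifolds through \(m_0\) that share one point share an open subset of each.

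\textbf{Construction of the leaf.} Define \(L_{m_0}\) as the set of points reachable from \(m_0\) by a finite chain of plaques \(S^{1},\dots,S^{r}\) of Frobenius charts, with \(S^{i}\cap S^{i+1}\neq\emptyset\). Topologize \(L_{m_0}\) by declaring the plaques open; this is the leaf topology, finer than the subspace topology. The plaques together with the maps \(\psi_1|_{S}\) form an atlas modeled on \(\fs{F}_1\). Its transition maps are \(C_c^k\), because by the claim above, applied to \(S\) viewed as an integral manifold inside another chart, the transition is a restriction of the \(\fs{F}_1\)-part of \(\psi'\circ\psi^{-1}\).

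\(L_{m_0}\) is connected by construction. It is an integral manifold, since each plaque is one. It is maximal: if \(L\ni m_0\) is a connected integral manifold, cover \(L\) by components of \(L\cap\eu{U}_m\). Each lies in a plaque, and connectedness of \(L\) chains these plaques back to \(m_0\), so \(L\subseteq L_{m_0}\) with \(C_c^k\) inclusion. Uniqueness follows, since two maximal ones contain each other.

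\textbf{Main obstacle.} I expect the main obstacle to be the local uniqueness claim and the resulting smoothness of transitions, and in particular reconciling the paper's definition of integral manifold as a \emph{split submanifold} with the leaf topology. A leaf is generally only an initial (immersed) submanifold, so I would verify split-ness plaque-by-plaque only. I would also take care over the second-countability and Hausdorff issues of the leaf topology: Hausdorffness is inherited from \(\fs{M}\) since the leaf topology is finer. Second countability I would not claim unless \(\fs{M}\) is separable.
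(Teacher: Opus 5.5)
Your argument is correct. It is the same classical leaf construction as the paper's, but built on a different definition of the leaf.

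The paper proceeds as follows:
\begin{itemize}
\item It defines \(L_{m_0}\) as the set of points reachable from \(m_0\) by piecewise Keller's \(C_c^k\)-curves that are a.e.\ tangent to \(\TB{D}\), and shows by concatenation that plaques through points of \(L_{m_0}\) stay inside it.
\item It asserts, via ``transition maps send slices to slices'', that two plaques meet in a set open in both.
\item It gets \(C_c^k\) transitions from \(\mathrm{D}_1\eta=0\) for \(\psi_q\circ\psi_p^{-1}=(\xi,\eta)\).
\item It obtains \(N\subseteq L_{m_0}\) in one line: a connected integral manifold is path-connected by piecewise smooth curves, and these are automatically \(\TB{D}\)-tangent.
\end{itemize}

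You instead define the leaf by chains of plaques and route all the analysis through one slicing lemma: each component of \(L\cap\eu{U}\) lies in a single plaque. This is essentially the paper's later Lemma~\ref{lem:slicing}, front-loaded. It gives you two things cheaply that the paper only asserts. First, connectedness of \((L_{m_0},\tau_\ell)\); for the curve-defined leaf this would itself need the slicing lemma applied to tangent curves. Second, openness of plaque intersections, which you get by applying the lemma to each plaque inside the other's chart. The price is the chaining argument for maximality. In exchange, the paper's curve definition is chart-independent and is what it reuses for the equivalence relation in the foliation proposition.

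You are also more accurate than the paper on two points. You work with connected components of \(L\cap\eu{U}\) rather than with \(L\cap\eu{U}\) itself. You also note that \(L_{m_0}\) is in general only an immersed (initial) submanifold, whereas the paper's notion of integral manifold formally demands a split, hence embedded, submanifold.

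One part of your write-up should be dropped: the claim that containment in \(S_{\hat y}\) plus equality of tangent spaces makes \(\mathrm{pr}_1\circ\psi|_L\) a local diffeomorphism onto an open subset of \(W\), and the ``identity principle'' you derive from it. A submanifold chart of \(L\) only gives a \(C_c^k\) left inverse of \(\mathrm{pr}_1\circ\psi|_L\), defined on an open subset of \(W\). Openness of the image is exactly what an inverse function theorem would supply, and no such theorem is available in general Fr\'echet spaces, so containment does not ``replace'' it.

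Nothing in your proof needs this claim. \(C_c^k\)-smoothness of \(L\hookrightarrow S_{\hat y}\) only requires \(\mathrm{pr}_1\circ\psi\circ\iota\) to be \(C_c^k\). Maximality in the paper's sense only asks for a \(C_c^k\) inclusion, and the chaining argument uses containment, not openness. The paper's closing assertion that every connected integral manifold is an \emph{open} submanifold of \(L_{m_0}\) is unsupported for the same reason.
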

\begin{proof}
	By Proposition \ref{prop:efc} and Remark \ref{rem:1}, for every point \(m \in \fs{M}\), there exists a coordinate chart \((\eu{U}, \psi)\) such that \(\psi(\eu{U}) = W \times O \subseteq \fs{F}_1 \times \fs{F}_2\) and 
	\( \mathrm{T}\psi(\TB{D}|_{\eu{U}}) = \fs{F}_1 \times \{\zero{\fs{F}_2}\} \).
	
	Let \( P_c^k([0, 1], \fs{M})\) be the set of all piecewise Keller's \(C_c^k\)-curves \(\gamma\colon [0, 1] \to \fs{M}\). 
	A curve \(\gamma \in P_c^k([0, 1], \fs{M}) \) is called \(\TB{D}\)-tangent if \(\gamma'(t) \in \TB{D}_{\gamma(t)}\) holds for almost every \(t \in [0, 1]\). Define the leaf passing through \( m_0 \), denoted by \( L_{m_0} \), as follows
	\[
	L_{m_0} \coloneqq \left\{ p \in \fs{M} \mid \exists \gamma \in P_c^k([0, 1], \fs{M}) \colon \gamma(0)=m_0, \, \gamma(1)=p, \text{ and } \gamma'(t) \in \TB{D}_{\gamma(t)} \text{ for a.e. } t \in [0, 1] \right\}.
	\]
	By Proposition \ref{prop:efc}, there exists an atlas of Frobenius charts \( \mathcal{A} = \{ (\eu{U}_i, \psi_i) \}_{i \in I} \) for \( \fs{M} \). For any \( p \in L_{m_0} \), let \( (\eu{U}, \psi) \) be a Frobenius chart around \( p \) with \( \psi(p) = (x_p, y_p) \in W \times O \). 
	
	Since \(\fs{F}_1\) is locally path-connected, we can choose a path-connected open neighborhood \(W_0 \subseteq W\) containing \(x_p\). The restricted local slice passing through \( p \) is defined by
	\[ S_{p,0} \coloneqq \psi^{-1}(W_0 \times \{y_p\}). \]
	Now we show that if \(p \in L_{m_0}\), then \( S_{p,0} \subseteq L_{m_0}\). Let \(q \in S_{p,0}\). Since \(S_{p,0} \cong W_0 \subseteq \fs{F}_1\) is path-connected by construction, there exists a Keller's \(C_c^k\)-curve \(\sigma\colon [0,1] \to S_{p,0}\) with \(\sigma(0)=p\) and \(\sigma(1)=q\).	Because \(S_{p,0}\) is an integral manifold, \(\sigma'(t) \in \mathrm{T}_{\sigma(t)} S_{p,0} = \TB{D}_{\sigma(t)}\). Since \(p \in L_{m_0}\), there exists a \(\TB{D}\)-tangent piecewise Keller's \(C_c^k\)-curve \(\gamma\) from \(m_0\) to \(p\). The concatenation \(\eta = \gamma \ast \sigma\) defined by 
	\[
	(\gamma \ast \sigma)(t) = 
	\begin{cases} 
		\gamma(2t) & t \in [0, 1/2] \\
		\sigma(2t-1) & t \in (1/2, 1]
	\end{cases}
	\]
	is a piecewise Keller's \(C_c^k\)-curve from \(m_0\) to \(q\) that is \(\TB{D}\)-tangent. By the definition of \(L_{m_0}\), we conclude \(q \in L_{m_0}\), and thus \(S_{p,0} \subseteq L_{m_0}\).
	
	We endow \( L_{m_0} \) with the leaf topology, denoted by \( \tau_{\ell} \). The collection of all path-connected local slices 
	\[ \mathcal{B} = \{ S_{p,0} \mid p \in L_{m_0}, S_{p,0} \text{ is a path-connected local slice of a Frobenius chart} \} \]
	serves as a basis for \( \tau_{\ell} \). A subset \( \eu{O} \subseteq L_{m_0} \) is open in \( \tau_{\ell} \) if and only if it is a union of such path-connected slices. Because the transition maps between Frobenius charts are diffeomorphisms that preserve the distribution, they locally map slices to slices. Consequently, the intersection of two path-connected slices \( S_{p,0} \cap S_{q,0} \) corresponds to open sets in the coordinate spaces of both slices, meaning it is open in the intrinsic manifold topology of each individual slice. This implies that \( \mathcal{B} \) satisfies the basis axioms. The leaf topology \( \tau_{\ell} \) is strictly finer than the subspace topology, as the slices do not necessarily contain all points of \( L_{m_0} \) found in a neighborhood of \( \fs{M} \). Therefore, the inclusion map \( \iota \colon (L_{m_0}, \tau_{\ell}) \hookrightarrow (\fs{M}, \tau_{\fs{M}}) \) is continuous. 
	
	Now we prove that \( (L_{m_0}, \tau_{\ell}) \) is a Hausdorff topological space. Let \( p, q \) be two distinct points in \( L_{m_0} \). Since \( L_{m_0} \subseteq \fs{M} \) and \( \fs{M} \) is a Hausdorff space, there exist disjoint open sets \( U, V \in \tau_{\fs{M}} \) such that \( p \in U \) and \( q \in V \). The topology \( \tau_{\ell} \) is finer than the subspace topology \( \tau_{\text{sub}} \) inherited from \( \fs{M} \). Specifically, the inclusion map \( \iota\colon (L_{m_0}, \tau_{\ell}) \to (\fs{M}, \tau_{\fs{M}}) \) is continuous. Consequently, the sets 
	\[ O_p \coloneqq \iota^{-1}(U) = U \cap L_{m_0} \quad \text{and} \quad O_q \coloneqq \iota^{-1}(V) = V \cap L_{m_0} \]
	are open in the leaf topology \( \tau_{\ell} \). Since \( U \cap V = \emptyset \), it follows that \( O_p \cap O_q = \emptyset \). Thus, \( L_{m_0} \) is Hausdorff. 
	
	For each \( p \in L_{m_0} \), we define the local chart \( \alpha_p \coloneqq \text{pr}_1 \circ \psi|_{S_{p,0}} \colon S_{p,0} \to W_0 \subseteq \fs{F}_1 \). We prove that \( L_{m_0} \) is a Keller's \( C_c^k \)-Fr\'echet manifold. Let \( S_{p,0} \) and \( S_{q,0} \) be two overlapping path-connected slices. The transition map is given by \( \tau_{qp} = \alpha_q \circ \alpha_p^{-1} \). Let \( \Psi_{qp} = \psi_q \circ \psi_p^{-1} \) be the transition map between the corresponding Frobenius charts of \( \fs{M} \), which we write in component form as \( \Psi_{qp}(x, y) = (\xi(x, y), \eta(x, y)) \). Since both charts are Frobenius, the derivative \( \mathrm{D}\Psi_{qp} \) maps the subspace \( \fs{F}_1 \times \{\zero{\fs{F}_2}\} \) to itself. This implies that the partial derivative with respect to the first factor of the second component vanishes, i.e., \( \mathrm{D}_1 \eta = 0 \). Consequently, on each connected component of the overlap, \( \eta \) depends only on \( y \). On a specific leaf where \( y \) is fixed, the transition map simplifies to the following
	\[ \tau_{qp}(x) = \xi(x, \text{const}). \]
	As \( \Psi_{qp} \) is a Keller's \( C_c^k \)-diffeomorphism of \( \fs{M} \), its restriction \( \tau_{qp} \) is a \( C_c^k \)-diffeomorphism between open sets of \( \fs{F}_1 \). Thus, the collection of charts \( \{ (S_{p,0}, \alpha_p) \}_{p \in L_{m_0}} \) defines a Keller's \( C_c^k \)-atlas for \( L_{m_0} \), modeled on the Fr\'echet space \( \fs{F}_1 \).
	
	We now verify that \( L_{m_0} \) is an integral manifold of the distribution \( \TB{D} \). By the definition of the topology, for any point \( p \in L_{m_0} \), the path-connected local slice \( S_{p,0} \) is an open neighborhood of \( p \) in \( L_{m_0} \). Consequently, the tangent space of the leaf at \( p \) is identical to the tangent space of the slice, i.e.,
	\(
	\mathrm{T}_p L_{m_0} = \mathrm{T}_p S_{p,0}.
	\)
	Recall that the restricted slice \( S_{p,0} \) is defined through a Frobenius chart \( (\eu{U}, \psi) \) as the pre-image \( \psi^{-1}(W_0 \times \{ y_p \}) \). By Remark \ref{rem:1}, the tangent map \( \mathrm{T}_p\psi \) satisfies the following
	\begin{equation*}
		\mathrm{T}_p\psi(\TB{D}_p) = \fs{F}_1 \times \{\zero{\fs{F}_2}\}.
	\end{equation*}
	Since the tangent space to the slice \( W_0 \times \{ y_p \} \) in the coordinate space is precisely \( \fs{F}_1 \times \{\zero{\fs{F}_2}\} \), the inverse map \( (\mathrm{T}_p\psi)^{-1} \) carries this subspace back to the distribution. Thus,
	\(
	\mathrm{T}_p S_{p,0} = \TB{D}_p
	\). Combining these yields \( \mathrm{T}_p L_{m_0} = \TB{D}_p \) for all \( p \in L_{m_0} \), thus \( L_{m_0} \) is an integral manifold of \( \TB{D} \).
	
	Let \( N \) be any connected integral manifold of \( \TB{D} \) such that \( m_0 \in N \). We show that \( N \) is necessarily a submanifold of \( L_{m_0} \). Let \( q \in N \). Since \( N \) is connected and locally path-connected, it follows that \( N \) is path-connected. Therefore, there exists a piecewise Keller's \( C_c^k \)-curve \( \gamma\colon [0, 1] \to N \) connecting \( m_0 \) to \( q \). Because \( N \) is an integral manifold, its tangent bundle satisfies \( \mathrm{T} N = \TB{D}|_N \). Consequently, \( \gamma'(t) \in \TB{D}_{\gamma(t)} \) holds almost everywhere. By the definition of the leaf, we have \( q \in L_{m_0} \), thus \( N \subseteq L_{m_0} \).
	
	Now, consider a point \( p \in N \) and a Frobenius chart \( (\eu{U}, \psi) \) around \( p \). In this chart, \( \TB{D} \) is represented by the subspace \( \fs{F}_1 \times \{\zero{\fs{F}_2}\} \). Since \( \mathrm{T} N = \TB{D} \), the image \( \psi(N \cap \eu{U}) \) must be tangent to these slices. For a connected \( N \), this local constraint implies that \( N \cap \eu{U} \) lies within a single path-connected slice \( S_{p,0} \). Because \( S_{p,0} \) is an open set in the topology of \( L_{m_0} \), the inclusion map \( \iota\colon N \hookrightarrow L_{m_0} \) is locally an identification with a slice. This implies that \( \iota \) is a Keller's \( C_c^k \)-immersion. Therefore, \( L_{m_0} \) is the unique maximal connected integral manifold containing \( m_0 \), and any other such manifold is an open submanifold of \( L_{m_0} \).
	
\end{proof}

\begin{definition}[Foliation of a Fr\'echet Manifold] \label{def:foliation}
	Let \(\fs{M}\) be a Keller's \(C_c^k\)-Fr\'echet manifold modeled on \(\fs{F} = \fs{F}_1 \oplus \fs{F}_2\). A foliation \(\Phi = \{\mathcal{L}_\alpha\}_{\alpha \in A}\) of \(\fs{M}\) is a partition of \(\fs{M}\) into disjoint connected sets, called leaves, such that every point \(m \in \fs{M}\) has a chart \((\eu{U}, \psi)\) with \(\psi(\eu{U}) = W \times O \subseteq \fs{F}_1 \times \fs{F}_2\) such that for each leaf \(\mathcal{L}_\alpha\), every connected component of the intersection \(\eu{U} \cap \mathcal{L}_\alpha\), denoted by \((\eu{U} \cap \mathcal{L}_\alpha)^\beta\), is given by
	\[
	\psi((\eu{U} \cap \mathcal{L}_\alpha)^\beta) = W \times \{c_\alpha^\beta\}
	\]
	for some constant \(c_\alpha^\beta \in O\). Such a chart is called foliated or distinguished by \(\Phi\). The dimension (resp., codimension) of the foliation is the dimension of \(\fs{F}_1\) (resp., \(\fs{F}_2\)). The set 
	\[
	\TB{D}(\Phi) \coloneqq \bigcup_{\alpha \in A} \bigcup_{m \in \mathcal{L}_\alpha} \mathrm{T}_m \mathcal{L}_\alpha
	\]
	is a split subbundle of \( \mathrm{T}\fs{M} \) called the tangent bundle to the foliation. The quotient bundle 
	\(
	\nu(\Phi) \coloneqq \mathrm{T}\fs{M} / \TB{D}(\Phi)
	\)
	is called the normal bundle to the foliation \( \Phi \). Elements of \( \TB{D}(\Phi) \) are called vectors tangent to the foliation \( \Phi \).
\end{definition}
Let \( (\eu{U}, \psi) \) be a foliated chart as defined in Definition~\ref{def:foliation}, where \( \psi \colon \eu{U} \to W \times O \subseteq \fs{F}_1 \times \fs{F}_2 \). For any point \( u \in \eu{U} \cap L_\alpha \), the leaf component \( (\eu{U} \cap L_\alpha)^\beta \) is mapped by \( \psi \) to the slice \( W \times \{c_\alpha^\beta\} \). Since the tangent space to a slice in the model space is simply the first factor, we have
\[
\mathrm{T}_u \psi(\mathrm{T}_u \mathcal{L}_\alpha) = \fs{F}_1 \times \{\zero{\fs{F}_2}\}.
\]
Consequently, the restriction of the tangent bundle of the foliation to the chart neighborhood \( \eu{U} \) satisfies
\[
\mathrm{T}\psi\bigl(\TB{D}(\Phi)|_{\eu{U}}\bigr) = (W \times O) \times \bigl(\fs{F}_1 \times \{\zero{\fs{F}_2}\}\bigr).
\]
The tangent bundle charts induced by the foliated atlas of \( \fs{M} \) satisfy the subbundle property by mapping each vector \( v_m \in \TB{D}(\Phi)_m \) to the pair \( (\psi(m), \mathrm{T}_m \psi(v_m)) \in (W \times O) \times (\fs{F}_1 \times \{\zero{\fs{F}_2}\}) \). Because the fiber \( \fs{F}_1 \times \{\zero{\fs{F}_2}\} \) is a split subspace of \( \fs{F}_1 \oplus \fs{F}_2 \), the bundle \( \TB{D}(\Phi) \) is a split subbundle of \( \TB{M} \). It follows that the quotient \( \nu(\Phi)_m \cong \fs{F}_2 \) is well-defined and inherits the structure of a Fr\'echet vector bundle over \( \fs{M} \).
\begin{lemma}[Slicing Property of Frobenius Charts] \label{lem:slicing}
	Let \(\psi\colon \eu{U} \to W \times O \subseteq \fs{F}_1 \times \fs{F}_2\) be a Frobenius chart for the distribution \(\TB{D}\). Let \(S \subseteq \eu{U}\) be a connected Keller's \(C_c^k\)-submanifold, \(k\geq1\), modeled on 
	\(\fs{F}_1\) such that for every \(p \in S\), the tangent space satisfies \(\mathrm{T}_p \psi(\mathrm{T}_p S) \subseteq \fs{F}_1 \times \{\zero{\fs{F}_2}\}\). Then there exists a unique constant \(c \in O\) such that \(\psi(S) \subseteq W \times \{c\}\).
\end{lemma}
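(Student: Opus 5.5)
The plan is to study the second component of the chart along $S$. Define $g \coloneqq \mathrm{pr}_2 \circ \psi|_S \colon S \to O \subseteq \fs{F}_2$. It is a composition of the $C_c^k$-inclusion $\iota\colon S \hookrightarrow \eu{U}$, the chart $\psi$ and the continuous linear projection $\mathrm{pr}_2$, so by the chain rule ([Corollary 1.3.2, \cite{ke}]) it is a Keller's $C_c^k$-mapping, $k \geq 1$. For $p \in S$ and $w \in \mathrm{T}_p S$, the chain rule gives $\mathrm{T}_p g(w) = \mathrm{pr}_2\bigl(\mathrm{T}_p\psi(w)\bigr)$. By hypothesis this equals $\zero{\fs{F}_2}$. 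Hence $\mathrm{T}_p g = 0$ for every $p \in S$.

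Next we show that $g$ is locally constant. Fix $p \in S$ and take a chart $\alpha\colon S \supseteq A \to B \subseteq \fs{F}_1$ of $S$ around $p$. Shrink it so that $B$ is convex; this is possible because $\fs{F}_1$ is locally convex. Set $h \coloneqq g \circ \alpha^{-1}\colon B \to \fs{F}_2$, which satisfies $\mathrm{D}h \equiv 0$ on $B$. For $u, u' \in B$, consider the curve $\gamma(t) = h(u + t(u'-u))$ on $[0,1]$. It is $C^1$ with $\gamma'(t) = \mathrm{D}h(u+t(u'-u))(u'-u) = \zero{\fs{F}_2}$.

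To conclude $\gamma(1) = \gamma(0)$ in a Fréchet space, compose with an arbitrary continuous functional $\lambda \in \fs{F}_2'$. The real function $\lambda \circ \gamma$ has zero derivative, so by the one-variable mean value theorem $\lambda(\gamma(1)) = \lambda(\gamma(0))$. Since $\fs{F}_2$ is Hausdorff and locally convex, the Hahn--Banach theorem says $\fs{F}_2'$ separates points, and therefore $h(u) = h(u')$. Thus $g$ is constant on the open neighbourhood $A$ of $p$.

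Finally we use connectedness. Pick $p_0 \in S$ and set $c \coloneqq g(p_0)$. The set $\{p \in S \mid g(p) = c\}$ is nonempty and closed, because $g$ is continuous and $\fs{F}_2$ is Hausdorff. It is also open, because $g$ is locally constant. Since $S$ is connected, the set is all of $S$, so $\psi(S) \subseteq W \times \{c\}$. Uniqueness of $c$ is immediate: $S \neq \emptyset$ (it is connected and, as a submanifold, nonempty), and the slices $W\times\{c\}$ for distinct $c$ are disjoint.

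The only genuinely infinite-dimensional step is passing from vanishing derivative to local constancy. The scalarization argument via Hahn--Banach handles it, so I expect no real obstruction; care is needed only to ensure that chart domains can be chosen convex.
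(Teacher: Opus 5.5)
Your proof is correct, but it takes a different route from the paper's. The paper argues globally. It passes to $\hat S=\psi(S)$ and asserts that $\hat S$ is path-connected. It then joins any two points $a,b\in\hat S$ by a piecewise Keller's $C_c^k$-curve $\gamma$ in $\hat S$ and applies the fundamental theorem of calculus in the Fr\'echet space, writing $\mathrm{Pr}_2(b)-\mathrm{Pr}_2(a)=\int_0^1 \mathrm{D}\mathrm{Pr}_2(\gamma(t))(\gamma'(t))\,dt$. The integrand vanishes because $\gamma'(t)\in \fs{F}_1\times\{\zero{\fs{F}_2}\}$.

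You work locally first. You show that $g=\mathrm{pr}_2\circ\psi|_S$ has vanishing tangent map. You get local constancy on convex chart domains by composing with functionals $\lambda\in\fs{F}_2'$ and using the one-variable mean value theorem together with Hahn--Banach separation. You then finish with a clopen argument on the connected space $S$.

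Your route needs no vector-valued integration. It also avoids a step the paper leaves implicit: that topological path-connectedness of $\hat S$ upgrades to piecewise smooth curves lying in $S$ as a manifold, which is what tangency of $\gamma'(t)$ requires. That upgrade is itself proved by essentially your convex-chart/clopen argument. You also address the uniqueness of $c$ for nonempty $S$, which the paper's proof never mentions.

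The paper's route is shorter once smooth path-connectedness and the fundamental theorem of calculus from Keller's calculus are granted. It also makes the geometric picture explicit: the $\fs{F}_2$-component is integrated along a path tangent to the distribution.
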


\begin{proof}
	Let \(\hat{S} = \psi(S) \subseteq W \times O\). Since \(\psi\) is a homeomorphism and \(S\) is connected, \(\hat{S}\) is a connected subset of the product Fr\'echet space \(\fs{F}_1 \times \fs{F}_2\), and  since \(\hat{S}\) is locally path-connected, it follows that \(\hat{S}\) is path-connected. 
	
	Let \(\mathrm{Pr}_2\colon \fs{F}_1 \times \fs{F}_2 \to \fs{F}_2\) be the canonical projection onto the second factor, \(\mathrm{Pr}_2(x, y) = y\). This is a continuous linear map, and its derivative at any point \((x, y)\) is simply the map itself, i.e., \(\mathrm{D}\mathrm{Pr}_2(x, y)(h_1, h_2) = h_2\). Now, take any two points \(a, b \in \hat{S}\). There exists a piecewise Keller's \(C_c^k\)-curve \(\gamma\colon [0, 1] \to \hat{S}\) such that \(\gamma(0) = a\) and \(\gamma(1) = b\). By the Fundamental Theorem of Calculus ([Proposition 1.1.3, \cite{ke}]) we have
	\[
	\mathrm{Pr}_2(b) - \mathrm{Pr}_2(a) = \int_0^1 \mathrm{D}\mathrm{Pr}_2(\gamma(t))  (\gamma'(t)) \, dt.
	\]
	By the hypothesis, the tangent vector to the curve \(\gamma'(t)\) must lie in \(\mathrm{T}_{\gamma(t)}\hat{S}\). Therefore, \(\gamma'(t) \in \fs{F}_1 \times \{\zero{\fs{F}_2}\}\) for almost every \(t\). Let \(\gamma'(t) = (v_1(t), \zero{\fs{F}_2})\). Substituting this into the derivative of the projection implies
	\[
	\mathrm{D}\mathrm{Pr}_2(\gamma(t))  ((v_1(t), \zero{\fs{F}_2})) = \mathrm{Pr}_2(v_1(t),\zero{\fs{F}_2}) =\zero{\fs{F}_2}.
	\]
	The integrand vanishes identically on \([0, 1]\), thus
	\(
	\mathrm{Pr}_2(b) = \mathrm{Pr}_2(a)
	\).	Since this holds for any \(a, b \in \hat{S}\), the second coordinate must be constant across the entire set \(\hat{S}\).
\end{proof}
\begin{proposition}
	Under the hypotheses of Theorem \ref{thm:global_frobenius}, the collection of unique maximal connected integral manifolds \(\Phi = \{L_m\}_{m \in \fs{M}}\) constitutes a foliation of \(\fs{M}\).
\end{proposition}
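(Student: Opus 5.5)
We have to check three things: that $\Phi$ is a partition of $\fs{M}$ into connected sets, that every point has a foliated chart in the sense of Definition~\ref{def:foliation}, and that $\TB{D}(\Phi)=\TB{D}$.

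\textbf{Partition.} We first show that the leaves $L_m$ of Theorem~\ref{thm:global_frobenius} partition $\fs{M}$. Since the constant curve is $\TB{D}$-tangent, $m\in L_m$, so the leaves cover $\fs{M}$. Suppose $L_m\cap L_{m'}\neq\emptyset$ and pick $q$ in the intersection. Then there are $\TB{D}$-tangent piecewise $C_c^k$-curves from $m$ to $q$ and from $m'$ to $q$. Reversing a $\TB{D}$-tangent curve gives another one, because the fibres $\TB{D}_{\gamma(t)}$ are linear subspaces and so are closed under $v\mapsto -v$. Concatenating as in the proof of Theorem~\ref{thm:global_frobenius}, we see that the relation ``joined by a $\TB{D}$-tangent curve'' is an equivalence relation. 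Hence $L_m=L_{m'}$, and distinct leaves are disjoint. Each leaf is path-connected by construction, hence connected.

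\textbf{Foliated charts.} Fix $m\in\fs{M}$ and take a Frobenius chart $(\eu{U},\psi)$ around $m$ from Proposition~\ref{prop:efc}. Shrink it so that $\psi(\eu{U})=W\times O$ with $W$ path-connected, which is possible since $\fs{F}_1$ is locally convex. Let $\mathcal{L}_\alpha$ be a leaf meeting $\eu{U}$, and let $C$ be a connected component of $\eu{U}\cap\mathcal{L}_\alpha$.

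We show that $\psi(C)=W\times\{c\}$ for some $c$, in two inclusions.
\begin{itemize}
\item[(i)] \emph{$\supseteq$.} Let $p\in C$ with $\psi(p)=(x_p,c)$. The whole slice $\psi^{-1}(W\times\{c\})$ is connected and consists of points reachable from $p$ by $\TB{D}$-tangent curves, since it is an integral manifold by Remark~\ref{rem:1}. Hence it lies in $\mathcal{L}_\alpha\cap\eu{U}$ and, being connected and containing $p$, it lies in $C$.
\item[(ii)] \emph{$\subseteq$.} Apply Lemma~\ref{lem:slicing} to $C$. This requires $C$ to be a connected $C_c^k$-submanifold modeled on $\fs{F}_1$ with $\mathrm{T}\psi(\mathrm{T}C)\subseteq\fs{F}_1\times\{\zero{\fs{F}_2}\}$. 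For this we use that $C$ is open in the leaf topology $\tau_\ell$, being a component of $\iota^{-1}(\eu{U})$, and that $\mathrm{T}_pL_\alpha=\TB{D}_p$ by Theorem~\ref{thm:global_frobenius}. Remark~\ref{rem:1} then gives the tangency condition, and the lemma yields a unique $c$ with $\psi(C)\subseteq W\times\{c\}$.
\end{itemize}
Together, (i) and (ii) give $\psi(C)=W\times\{c\}$, so $\psi$ is distinguished by $\Phi$.

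\textbf{Main obstacle.} The delicate point is the word ``connected'' in the definition of a foliated chart. Components of $\eu{U}\cap\mathcal{L}_\alpha$ must be taken in the leaf topology, since in the subspace topology of $\fs{M}$ countably many distinct slices could accumulate and merge into one component. I would state this convention explicitly, following the remark in Theorem~\ref{thm:global_frobenius} that $\tau_\ell$ is finer than the subspace topology.

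One must also justify that a $\tau_\ell$-connected subset $C$ is path-connected by piecewise $C_c^k$-curves, since Lemma~\ref{lem:slicing} integrates along such curves. This follows because $L_\alpha$ is locally modeled on the open sets $W_0\subseteq\fs{F}_1$ via the charts $\alpha_p$. If one prefers to avoid this, the same constancy argument can be run slice by slice: on a basic open set $S_{p,0}$, the second coordinate $\mathrm{Pr}_2\circ\psi$ is locally constant in $\tau_\ell$, and a locally constant function on a connected space is constant.

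\textbf{Tangent bundle.} Finally, $\TB{D}(\Phi)=\TB{D}$ follows immediately from $\mathrm{T}_mL_m=\TB{D}_m$, which was established in Theorem~\ref{thm:global_frobenius}.
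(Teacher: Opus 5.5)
Your proof is correct and follows the paper's argument essentially step by step. The partition comes from the equivalence relation of joinability by $\TB{D}$-tangent curves. In a Frobenius chart, each component $C$ is shown to be $\tau_\ell$-open, so $\mathrm{T}_pL_\alpha=\TB{D}_p$ makes it tangent to $\fs{F}_1\times\{\zero{\fs{F}_2}\}$, and Lemma~\ref{lem:slicing} gives $\psi(C)\subseteq W\times\{c\}$; the connected slice through a point of $C$ gives the reverse inclusion.

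Your explicit shrinking to a path-connected $W$ and your convention that components are taken in $\tau_\ell$ make precise what the paper leaves implicit. The motivating remark for that convention is slightly off, though. Since $\psi(\eu{U}\cap\mathcal{L}_\alpha)=W\times C_\alpha$ and countable subsets of the metrizable space $\fs{F}_2$ are totally disconnected, countably many plaques can never merge into one subspace-component. The issue can arise only when a leaf meets $\eu{U}$ in uncountably many plaques.
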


\begin{proof}
	By Theorem \ref{thm:global_frobenius}, for every \(m \in \fs{M}\), the leaf \(L_m\) is defined as the set of points that can be joined to \(m\) by piecewise \(\TB{D}\)-tangent Keller's \(C_c^k\)-curves. This relation defines an equivalence relation on \(\fs{M}\) where reflexivity is established by the existence of constant curves \(\gamma(t) = m\) at every point. Symmetry is guaranteed because if \(p\) is connected to \(m\), then the time-reversed curve \(\gamma(1-t)\) is also \(\TB{D}\)-tangent, showing \(m\) is connected to \(p\). Transitivity follows from the concatenation of curves \(\gamma \ast \sigma\) as defined in the proof of Theorem \ref{thm:global_frobenius}, implying that if \(p \in L_m\) and \(q \in L_p\), then \(q \in L_m\).	As equivalence classes, the leaves \(L_m\) are either disjoint or identical, and their union is \(\fs{M}\). Connectedness follows directly from the path-reachability definition.
	
	Let \(m \in \fs{M}\) and choose a Frobenius chart \((\eu{U}, \psi)\) around \(m\) such that \(\mathrm{T}\psi(\TB{D}|_{\eu{U}}) = \fs{F}_1 \times \{\zero{\fs{F}_2}\}\). Let \(L_\alpha\) be a leaf and \((\eu{U} \cap L_\alpha)^\beta\) be a connected component of its intersection with the chart neighborhood. 
	We first show that \((\eu{U} \cap L_\alpha)^\beta\) is open in the leaf topology \(\tau_\ell\). For any point \(p \in (\eu{U} \cap L_\alpha)^\beta\), let \( y = (\mathrm{Pr}_2 \circ \psi)(p)\) and set \(S_p = \psi^{-1}(W \times \{y\})\) be the local slice containing \(p\). By the construction of the leaf topology, \(S_p\) is an open neighborhood of \(p\) in \((L_\alpha, \tau_\ell)\) and satisfies \(S_p \subseteq \eu{U} \cap L_\alpha\). Since \(S_p \cong W\) is path-connected (and thus connected), it must be contained within the same connected component of the intersection, i.e., \(S_p \subseteq (\eu{U} \cap L_\alpha)^\beta\). Thus, every point in the component has a \(\tau_\ell\)-open neighborhood inside the component, proving it is open in \(\tau_\ell\).
	
	Since \((\eu{U} \cap L_\alpha)^\beta\) is an open subset of the integral manifold \(L_\alpha\), their tangent spaces coincide at each point. Thus, for each \(p \in (\eu{U} \cap L_\alpha)^\beta\), the tangent space satisfies \(\mathrm{T}_p (\eu{U} \cap L_\alpha)^\beta = \TB{D}_p\). In the Frobenius chart, this corresponds to the following
	\[ 
	\mathrm{T}_p \psi \left( \mathrm{T}_p (\eu{U} \cap L_\alpha)^\beta \right) = \fs{F}_1 \times \{\zero{\fs{F}_2}\}.
	\]
	By Lemma \ref{lem:slicing}, there exists a unique \(c_\alpha^\beta \in O\) such that the image of the component is restricted to a constant \(\fs{F}_2\)-coordinate, i.e.,
	\[
	\psi((\eu{U} \cap L_\alpha)^\beta) \subseteq W \times \{c_\alpha^\beta\}.
	\]
	Conversely, for any point \(p \in (\eu{U} \cap L_\alpha)^\beta\), the slice \(S_p = \psi^{-1}(W \times \{c_\alpha^\beta\})\) (which is the same for all points because the second coordinate is constant) is contained in the component \((\eu{U} \cap L_\alpha)^\beta\), as shown when proving openness. We therefore conclude the equality
	\[
	\psi((\eu{U} \cap L_\alpha)^\beta) = W \times \{c_\alpha^\beta\}.
	\]
	This means that \((\eu{U}, \psi)\) is a foliated chart for the collection \(\Phi\), and completes the proof.
\end{proof}

\begin{theorem}[The Global Frobenius and Foliation Theorem] \label{thm:global_frobenius_foliation}
	Let \(\fs{M}\) be a Keller's \(C_c^k\)-Fr\'echet manifold \textup{(}\(k \geq 2\)\textup{)} and \(\TB{D} \subseteq \TB{M}\) a split subbundle that satisfies the local well-posedness condition W at every point. Then the following are equivalent\(\colon\)
	\begin{enumerate}[label=\textup{(\roman*)}, leftmargin=3em]
		\item There exists a foliation \(\Phi\) on \(\fs{M}\) such that \(\TB{D} = \TB{D}(\Phi)\).
		\item \(\TB{D}\) is integrable \textup{(}every point \(m \in \fs{M}\) is contained in a unique maximal connected integral manifold \(L_m\)\textup{)}.
		\item \(\TB{D}\) is involutive.
	\end{enumerate}
\end{theorem}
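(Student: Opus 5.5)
We will prove the cycle of implications (iii) \(\Rightarrow\) (ii) \(\Rightarrow\) (i) \(\Rightarrow\) (iii), using the results already established for the first two steps and a direct Lie-bracket computation in foliated charts for the last.

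\emph{(iii) \(\Rightarrow\) (ii).} Here \(\TB{D}\) is involutive and satisfies Condition W by hypothesis, so Theorem~\ref{thm:global_frobenius} applies verbatim. It yields, for every \(m\in\fs{M}\), a unique maximal connected integral manifold \(L_m\), which is exactly the integrability asserted in (ii).

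\emph{(ii) \(\Rightarrow\) (i).} We take \(\Phi=\{L_m\}_{m\in\fs{M}}\). The preceding Proposition shows that, under Condition W and involutivity, this collection is a foliation. However, (ii) alone does not supply involutivity, so we cannot invoke it directly. We will circumvent this in one of two ways.

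\begin{itemize}
\item[(a)] We first show (ii) \(\Rightarrow\) (iii) directly. This reuses the bracket argument below, applied to the integral manifolds \(L_m\) instead of the leaves of a foliation. The implication then goes through Theorem~\ref{thm:global_frobenius} and the preceding Proposition.
\item[(b)] Alternatively, we rerun the proof of that Proposition. Its only use of involutivity is to produce Frobenius charts (Proposition~\ref{prop:efc}), and we would need to supply these from (ii) instead.
\end{itemize}

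We choose route (a). Once \(\Phi\) is a foliation, the identity \(\TB{D}(\Phi)=\TB{D}\) holds because each \(L_m\) is an integral manifold: \(\mathrm{T}_pL_m=\TB{D}_p\) for every \(p\in L_m\), and the leaves cover \(\fs{M}\).

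\emph{(i) \(\Rightarrow\) (iii) (and likewise (ii) \(\Rightarrow\) (iii)).} This is the substantive direction. Let \(\vf{V},\vf{W}\) be local sections of \(\TB{D}=\TB{D}(\Phi)\) and take a foliated chart \((\eu{U},\psi)\). In this chart \(\mathrm{T}\psi(\TB{D}|_{\eu{U}})=(W\times O)\times(\fs{F}_1\times\{\zero{\fs{F}_2}\})\), so the local representatives have the form \(\vf{V}=(V_1,\zero{\fs{F}_2})\) and \(\vf{W}=(W_1,\zero{\fs{F}_2})\). The local bracket formula is \([\vf{V},\vf{W}]=\mathrm{D}\vf{W}(\vf{V})-\mathrm{D}\vf{V}(\vf{W})\). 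Its second component is
\[
\mathrm{D}(\zero{\fs{F}_2})(\vf{V})-\mathrm{D}(\zero{\fs{F}_2})(\vf{W})=\zero{\fs{F}_2},
\]
so the bracket lies in \(\fs{F}_1\times\{\zero{\fs{F}_2}\}\), that is, in \(\TB{D}\). In the graph notation of~\eqref{eq:fr1} this is the case \(\eu{F}\equiv 0\), where both sides of the identity vanish trivially.

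The bracket is chart-independent because diffeomorphisms are natural with respect to Lie brackets. Hence involutivity holds on all of \(\fs{M}\). For (ii) \(\Rightarrow\) (iii) we need charts in which the integral manifolds appear as slices \(W\times\{c\}\). These are obtained by straightening the submanifold charts of the \(L_m\) around each point, and the same computation then applies.

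\emph{Main obstacle.} The delicate step is the (ii) \(\Rightarrow\) (iii) route: producing, from the mere existence of maximal integral manifolds, charts that straighten \emph{all} nearby leaves simultaneously. Near \(m\) the integral manifolds through nearby points must be shown to fit into a product chart. Our plan is to use the graph representation \(\eu{F}\) of Definition~\ref{def:condition_W} and to express each integral manifold locally as the graph of a map \(x\mapsto \eu{Z}(x,y)\) with \(\mathrm{D}_1\eu{Z}=\eu{F}(x,\eu{Z})\). Differentiating this identity twice in the first variable and using the total symmetry of second derivatives from~\cite{ke} gives exactly the symmetry~\eqref{eq:fr1}, and hence involutivity, without requiring any foliated chart. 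We would try this pointwise derivation first, since it only requires that through each point there passes some integral manifold.
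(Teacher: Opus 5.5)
Your proof is correct, but it is organized differently from the paper's.

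\textbf{Comparison with the paper.} The paper runs the single cycle (i)\(\Rightarrow\)(ii)\(\Rightarrow\)(iii)\(\Rightarrow\)(i):
\begin{itemize}
\item (i)\(\Rightarrow\)(ii) is only asserted as standard.
\item (ii)\(\Rightarrow\)(iii) restricts two sections of \(\TB{D}\) to the integral manifold through a point and uses naturality of the bracket under the inclusion.
\item (iii)\(\Rightarrow\)(i) is Theorem~\ref{thm:global_frobenius} together with the foliation Proposition.
\end{itemize}
You instead prove (i)\(\Leftrightarrow\)(iii) and (ii)\(\Leftrightarrow\)(iii), using the same Theorem and Proposition for (iii)\(\Rightarrow\)(i) and (iii)\(\Rightarrow\)(ii).

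Replacing the paper's (i)\(\Rightarrow\)(ii) by the foliated-chart computation (i)\(\Rightarrow\)(iii) is a genuine improvement. In a foliated chart the second components vanish identically, so the bracket check is immediate. Existence, uniqueness and maximality of the leaves are then supplied by Theorem~\ref{thm:global_frobenius} under the standing Condition W. You therefore never have to verify by hand that the leaves of an abstract foliation are maximal integral manifolds, including the \(C_c^k\) inclusions that the paper's notion of maximality requires. Your pointwise second-derivative argument for (ii)\(\Rightarrow\)(iii) is a coordinate version of the paper's naturality argument. It yields the integrability condition (a) of Theorem~\ref{thm:local_frobenius} at every point, which is equivalent to involutivity by \eqref{eq:fr1}.

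\textbf{Two corrections in your (ii)\(\Rightarrow\)(iii) discussion.}

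\emph{The ``main obstacle'' is not needed.} The remedy you first propose, charts straightening all nearby integral manifolds at once built from their submanifold charts, is unjustified; it is essentially (ii)\(\Rightarrow\)(i). One submanifold chart of a single integral manifold \(N\ni p\) suffices. In that chart the second components \(X_2,Y_2\) of two sections of \(\TB{D}\) vanish along \(W\times\{\zero{\fs{F}_2}\}\), and \(X(p),Y(p)\) are tangent to that slice. Hence \(\mathrm{D}Y_2(p)X(p)-\mathrm{D}X_2(p)Y(p)=\zero{\fs{F}_2}\), so \([X,Y](p)\in\mathrm{T}_pN=\TB{D}_p\). This is exactly the paper's argument.

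\emph{The graph representation needs repair.} In the graph-based derivation you cannot in general write \(N\) as a graph \(x\mapsto\eu{Z}(x)\) over \(\fs{F}_1\). That requires inverting the first component of a parametrization, i.e.\ an inverse function theorem, which is unavailable in Fr\'echet spaces. Use a local parametrization \(g=(g_1,g_2)\) of \(N\) in the \(\eu{F}\)-chart instead:
\begin{itemize}
\item Tangency to \(\TB{D}\) gives \(\mathrm{D}g_2(w)=\eu{F}(g(w))\circ\mathrm{D}g_1(w)\).
\item Differentiating once more and using the symmetry of \(\mathrm{D}^2g_1\) and \(\mathrm{D}^2g_2\) shows that \(\mathrm{D}\eu{F}(g(w))(\mathrm{D}g(w)u')(\mathrm{D}g_1(w)u)\) is symmetric in \(u,u'\).
\item \(\mathrm{D}g(w)\) maps \(\fs{F}_1\) isomorphically onto the graph \(\TB{D}_{g(w)}\), so \(\mathrm{D}g_1(w)\) is a bijection of \(\fs{F}_1\).
\end{itemize}
This symmetry is therefore exactly condition (a) at \(g(w)\).
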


\begin{proof}
	\textup{(i) \(\implies\) (ii):} The proof is standard; the leaves of the foliation serve as the unique maximal connected integral manifolds.
	
	\textup{(ii) \(\implies\) (iii):} Let \(X, Y\) be local sections of \(\TB{D}\). For any \(m \in \fs{M}\), let \(L_m\) be the unique integral manifold through \(m\). Since \(X\) and \(Y\) are tangent to \(L_m\), their restriction to \(L_m\) are well-defined vector fields on the submanifold. Their Lie bracket \([X, Y]\) restricted to \(L_m\) is therefore tangent to \(L_m\), which implies \([X|_m, Y|_m] \in \TB{D}_m\).
	
	\textup{(iii) \(\implies\) (i):} This follows directly from Theorem \ref{thm:global_frobenius_foliation}. Since \(\TB{D}\) is involutive and satisfies Condition W by the standing hypothesis, it admits Frobenius charts, which define a foliation \(\Phi\) such that \(\TB{D} = \TB{D}(\Phi)\).
\end{proof}

For the convenience of the reader, we present the following definitions, following the treatment of general locally convex manifolds given in \cite{neeb}. 
\begin{definition}
	Let \( \fs{M} \) be a Keller's \( C_c^k \)-Fr\'echet manifold modeled on \(\fs{F}\) and \( \fs{E} \) be a Fr\'echet space. We denote by \(\eu{V}_{C_c}^k(\fs{M}, E)\) the set of all Keller's \(C_c^k\) sections of a vector bundle \(E \to \fs{M}\). 
	When \(E = \mathrm{T}\fs{M}\), we write \(\eu{V}_{C_c}^k(\fs{M})\) instead of \(\eu{V}_{C_c}^k(\fs{M}, \mathrm{T}\fs{M})\).
	For an open set \(\eu{U} \subseteq \fs{M}\), \(\eu{V}_{C_c}^k(\eu{U})\) denotes the space of \(C_c^k\) sections of the restricted tangent bundle \(\mathrm{T}\eu{U}\).

	(a) An {\( \fs{E} \)-valued  Keller's \( C_c^k \)-\( p \)-form} \( \omega \) on \( \fs{M} \) is a mapping that associates to each \( x \in \fs{M} \) a \( p \)-linear alternating map \( \omega_x\colon (\mathrm{T}_x{\fs{M}})^p \to \fs{E} \). We require that for every local chart \( (\eu{U}, \varphi) \), the local representation 
	\[
	\eu{U} \times (\fs{F})^p \to \fs{E}, \quad (x, v_1, \dots, v_p) \mapsto \omega_{x}(v_1, \dots, v_p)
	\]
	is a Keller's \( C_c^k \)-map.  We denote by \( \Omega^p(\fs{M}, \fs{E}) \) the space of \( \fs{E} \)-valued \( p \)-forms, and by \( \Omega(\fs{M}, \fs{E}) \) the graded space of all \( \fs{E} \)-valued differential forms on \( \fs{M} \), and identify \( \Omega^0(\fs{M}, \fs{E}) \) with the space \( C_c^{\infty}(\fs{M}, \fs{E}) \).
	
	(b) Let \( \fs{E}_1, \fs{E}_2, \fs{E}_3 \) be Fr\'echet spaces and \( \beta\colon \fs{E}_1 \times \fs{E}_2 \to \fs{E}_3 \) be a continuous bilinear map. The {wedge product} 
	\[
	\wedge\colon \Omega^p(\fs{M}, \fs{E}_1) \times \Omega^q(\fs{M}, \fs{E}_2) \to \Omega^{p+q}(\fs{M}, \fs{E}_3)
	\]
	is defined point-wise  by
	\[
	(\omega \wedge \eta)_m(v_1, \dots, v_{p+q}) \coloneqq \frac{1}{p!q!} \sum_{\sigma \in S_{p+q}} \operatorname{sgn}(\sigma) \beta\bigl(\omega_m(v_{\sigma(1)}, \dots, v_{\sigma(p)}), \eta_m(v_{\sigma(p+1)}, \dots, v_{\sigma(p+q)})\bigr).
	\]
	%	The direct sum 
	%	\[ \Omega(\fs{M}, \fs{E}) \coloneqq \bigoplus_{p \ge 0} \Omega^p(\fs{M}, \fs{E}) \]
	%	forms a graded \(\Omega(\fs{M}, \mathbb{R})\)-module under the wedge product.
	
	(c) The definition of the {exterior differential} \( \mathrm{d} \colon \Omega^p(\fs{M}, \fs{E}) \to \Omega^{p+1}(\fs{M}, \fs{E}) \) is uniquely determined by property that for any open subset \( \eu{U} \subseteq \fs{M} \) and vector fields  \( X_0, \dots, X_p \in \eu{V}_{C_c}^{k-1}(\eu{U}) \), the \( \fs{E} \)-valued function \( (\mathrm{d}\omega)(X_0, \dots, X_p) \in C_c^{k-1}(\eu{U}, \fs{E}) \) is given by 
	\[
	\begin{aligned}
		(\mathrm{d}\omega)(X_0, \dots, X_p) \coloneqq & \sum_{i=0}^p (-1)^i X_i \bigl( \omega(X_0, \dots, \widehat{X}_i, \dots, X_p) \bigr) \\
		& + \sum_{i<j} (-1)^{i+j} \omega([X_i, X_j], X_0, \dots, \widehat{X}_i, \dots, \widehat{X}_j, \dots, X_p).
	\end{aligned}
	\]
	Here, \( X_i(f)  \colon \fs{M} \to \fs{E}\) is given by \(X_i(f) \coloneqq \mathrm{D}f \circ X_i\), For a function \( f \in C_c^{k}(\fs{M}, \fs{E}) \) and a vector field \( X \), we set \( X(f)(x) \coloneqq \mathrm{D}f(x)(X(x)) \in \fs{E} \).
	
	(d) For any vector field \( Y \in \eu{V}_{C_c}^{k-1}(\fs{M}) \), the Lie derivative along \( Y \) is a unique linear map given by
	\begin{gather*}
		\mathcal{L}_Y \colon \Omega^p(\fs{M}, \fs{E}) \to \Omega^{p}(\fs{M}, \fs{E}) \quad (\text{for } k \geq 2)\\
		(\mathcal{L}_Y \omega)(X_1, \dots, X_p) \coloneqq Y \bigl( \omega(X_1, \dots, X_p) \bigr) - \sum_{j=1}^p \omega(X_1, \dots, [Y, X_j], \dots, X_p).
	\end{gather*}
	Here \( \omega \in \Omega^p(\fs{M}, \fs{E}) \) and vector fields \( X_1, \dots, X_p \in \eu{V}_{C_c}^{k-1}(\eu{U}) \). Furthermore, for each vector field \( X \in \eu{V}_{C_c}^{k-1}(\fs{M}) \) and \( p \geq 1 \), there exists a unique linear map 
	\begin{gather*}
		i_X \colon \Omega^p(\fs{M}, \fs{E}) \to \Omega^{p-1}(\fs{M}, \fs{E})\\
		(i_X \omega)_x(v_1, \dots, v_{p-1}) \coloneqq \omega_x(X(x), v_1, \dots, v_{p-1}),
	\end{gather*}
	where \( v_1, \dots, v_{p-1} \in \mathrm{T}_x\fs{M} \). For a \( 0 \)-form \( \omega \in \Omega^0(\fs{M}, \fs{E}) = C_c^k(\fs{M}, \fs{E}) \), we define \( i_X \omega \coloneqq 0 \).
\end{definition}
For an \( \fs{E} \)-valued \( p \)-form \( \omega \in \Omega^p(\fs{M}, \fs{E}) \) and vector fields \( X_0, \dots, X_p \in \eu{V}_{C_c}^{k-1}(\fs{M}) \), the {exterior derivative} is given by
\begin{align}\label{eq:ex}
	(d\omega)(X_0, \dots, X_p) \coloneqq & \sum_{i=0}^p (-1)^i (\mathcal{L}_{X_i} \omega)(X_0, \dots, \widehat{X}_i, \dots, X_p) \\ \nonumber
	& + \sum_{i<j} (-1)^{i+j+1} \omega([X_i, X_j], X_0, \dots, \widehat{X}_i, \dots, \widehat{X}_j, \dots, X_p).
\end{align}
The proof of this identity is standard and proceeds by {induction on \( p \)} and the application of {Cartan's Magic Formula} \( 	\mathcal{L}_X \omega = (i_X \circ \mathrm{d} + \mathrm{d} \circ i_X) \omega\).

Next we reformulate the Frobenius theorem in terms of differential ideals.

\noindent		
Let \( \omega \in \Omega^2(\fs{M}, \fs{E}) \) and assume that \( E_\omega = \{ v \in \mathrm{T}\fs{M} \mid i_v \omega = 0 \} \) is a subbundle of \( \mathrm{T}\fs{M} \). If \( X, Y \in \eu{V}_{C_c}^{k-1}(\fs{M}, E_\omega) \) are two sections of \( E_\omega \), then
\[
i_{[X,Y]} \omega = \mathcal{L}_X i_Y \omega - i_Y \mathcal{L}_X \omega = -i_Y (\mathrm{d} i_X \omega + i_X \mathrm{d}\omega) = i_X i_Y \mathrm{d}\omega.
\]
This identity shows that if \( \mathrm{d}\omega = 0 \), then \( X, Y \in \eu{V}_{C_c}^{k-1}(\fs{M}, E_\omega) \) implies that \( [X, Y] \in \eu{V}_{C_c}^{k-2}(\fs{M}, E_\omega) \), and so \( E_\omega \) is involutive. 
For any subbundle \( E \subseteq \mathrm{T}\fs{M} \), the \( p \)-annihilator of \( E \), denoted \( E^0(p) \), is defined as the subspace of global \( p \)-forms that vanish when evaluated on vectors in \( E \):
\[
E^0(p) = \left\{ \alpha \in \Omega^p(\fs{M}, \fs{E}) \;\middle|\; 
\alpha(m)(v_1, \dots, v_p) = \zero{\fs{E}}, \;\; \forall v_1, \dots, v_p \in E_m, \, \forall m \in \fs{M} \right\}.
\]
For each \( p \ge 0 \), \( E^0(p) \) is a linear subspace of \( \Omega^p(\fs{M}, \fs{E}) \) consisting of those forms that vanish when evaluated on smooth vector fields in \( \Vfields{\infty}{\fs{M}}{E} \). The direct sum
\[
I(E) = \bigoplus_{p \ge 0} E^0(p)
\]
is an \textit{ideal} of \( \Omega(\fs{M}, \fs{E}) \) in the sense that it is a graded \( \Omega(\fs{M}, \mathbb{R}) \)-submodule: if \( \omega_1, \omega_2 \in I(E) \) and \( \rho \in \Omega(\fs{M}, \mathbb{R}) \), then \( \omega_1 + \omega_2 \in I(E) \) and \( \rho \wedge \omega_1 \in I(E) \).

For an open subset \( U \subseteq \fs{M} \), we define the subspace of \( \fs{E} \)-valued \( p \)-forms annihilating the subbundle \( E \) by
\[
E^0(p)(U) \coloneqq \left\{ \alpha \in \Omega^p(U, \fs{E}) \mid \alpha(X_1, \dots, X_p) = 0, \quad \forall X_1, \dots, X_p \in \Vfields{\infty}{U}{E} \right\}.
\]
\begin{proposition} \label{prop:frobenius_ideal}
	A subbundle \( E \subseteq \mathrm{T}\fs{M} \) is involutive if for every open subset \( U \subseteq \fs{M} \), the exterior derivative maps the local annihilator of \( E \) into the next degree, i.e.,
	\[
	\mathrm{d}\left( E^0(1)(U) \right) \subseteq E^0(2)(U).
	\]
	Furthermore, if \( E \) is involutive, then \( \omega \in E^0(p)(U) \) implies \( \mathrm{d}\omega \in E^0(p+1)(U) \), so that \( \mathrm{d} I(E|_U) \subseteq I(E|_U) \).
\end{proposition}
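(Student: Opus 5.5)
Both directions rest on the invariant formula for \(\mathrm{d}\) recalled in Definition (c), read off on sections of \(E\). Throughout, \(U\subseteq\fs{M}\) is open and small enough to lie in a chart in which \(E\) is split, \(\mathrm{T}\fs{M}|_U = E|_U\oplus E'|_U\).

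\textbf{First statement.} Assume \(\mathrm{d}(E^0(1)(U))\subseteq E^0(2)(U)\) for every open \(U\). Let \(X,Y\) be local sections of \(E\) over \(U\); we must show \([X,Y]\in E\).
\begin{enumerate}
\item For \(\alpha\in E^0(1)(U)\), the formula gives
\[
\mathrm{d}\alpha(X,Y)=X(\alpha(Y))-Y(\alpha(X))-\alpha([X,Y]).
\]
The first two terms vanish because \(\alpha(X)=\alpha(Y)=0\). By hypothesis \(\mathrm{d}\alpha(X,Y)=0\), so \(\alpha([X,Y])=0\).
\item To conclude \([X,Y]_m\in E_m\), we need enough annihilating \(1\)-forms to separate vectors not in \(E\). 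Take \(\fs{E}\)-valued forms of the form \(\alpha=\lambda\circ \mathrm{pr}_{E'}\), where \(\mathrm{pr}_{E'}\) is the chart projection onto the complement \(\fs{F}_2\) and \(\lambda\in\mathcal{L}(\fs{F}_2,\fs{E})\). Since \(\mathrm{pr}_{E'}\) is smooth in the chart, these are genuine forms in \(E^0(1)(U)\).
\item Choosing \(\fs{E}\)-valued forms through continuous functionals on \(\fs{F}_2\) (Hahn--Banach, using that \(\fs{F}_2\) is Fr\'echet, hence locally convex), these \(\alpha\) separate the points of \(\fs{F}_2\). Hence \(\alpha([X,Y])=0\) for all such \(\alpha\) forces \(\mathrm{pr}_{E'}[X,Y]=0\), i.e.\ \([X,Y]\in E\).
\item The statement allows sections that are only defined near a point. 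To handle this, restrict to smaller open sets; since the hypothesis holds for every \(U\), localization is harmless.
\end{enumerate}

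\textbf{Second statement.} Now assume \(E\) is involutive and let \(\omega\in E^0(p)(U)\). Evaluate \(\mathrm{d}\omega(X_0,\dots,X_p)\) with all \(X_i\in\Vfields{\infty}{U}{E}\), using the formula of Definition (c).
\begin{enumerate}
\item Each term \(X_i(\omega(X_0,\dots,\widehat{X}_i,\dots,X_p))\) is the derivative of the zero function, hence zero.
\item Each term \(\omega([X_i,X_j],\dots)\) vanishes, because by involutivity \([X_i,X_j]\) is again a section of \(E\) and \(\omega\) annihilates \(E\).
\end{enumerate}
Thus \(\mathrm{d}\omega\) vanishes on sections of \(E\). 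To pass from sections to vectors, extend any \(v_0,\dots,v_p\in E_m\) to local smooth sections of \(E\); in a split chart, constant sections \((v,0)\) do this. Hence \(\mathrm{d}\omega\in E^0(p+1)(U)\). Summing over \(p\) gives \(\mathrm{d} I(E|_U)\subseteq I(E|_U)\).

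\textbf{Main obstacle.} The delicate step is step 3 of the first statement: producing enough annihilators, and in particular enough \(\fs{E}\)-valued ones when \(\fs{E}\) is arbitrary. I would first try nonzero \(e\in\fs{E}\) and \(\lambda(y)=\ell(y)e\) with \(\ell\in\fs{F}_2'\), which separates points of \(\fs{F}_2\) by Hahn--Banach. A second, smaller issue is regularity: the sections have class \(C_c^{k-1}\), which makes the bracket \(C_c^{k-2}\), so \(k\ge2\) is needed for the evaluations to make sense.
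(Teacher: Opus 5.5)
Your argument is correct and is essentially the paper's: the first part uses the same evaluation \(\mathrm{d}\alpha(X,Y)=-\alpha([X,Y])\), and the second part uses the same term-by-term vanishing in the invariant formula for \(\mathrm{d}\). The only difference is that the paper simply assumes annihilating \(1\)-forms separate vectors outside \(E\), whereas you justify this via the splitting and Hahn--Banach, using forms \(\ell(\mathrm{pr}(\cdot))\,e\) with the projection taken along \(E\) (e.g.\ \((v_1,v_2)\mapsto v_2-\eu{F}(x,y)v_1\) in a graph chart) and a fixed \(e\neq\zero{\fs{E}}\); this tacitly requires \(\fs{E}\neq\{\zero{\fs{E}}\}\), which the statement itself needs anyway.
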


\begin{proof}
	Assume that \( \mathrm{d}\left( E^0(1)(U) \right) \subseteq E^0(2)(U) \) for an open set \( U \subseteq \fs{M} \). Let \( X, Y \in \Vfields{\infty}{U}{E} \) and \( \alpha \in E^0(1)(U) \). By \eqref{eq:ex}, we have
	\[
	\mathrm{d}\alpha(X, Y) = \mathcal{L}_X(\alpha(Y)) - \mathcal{L}_Y(\alpha(X)) - \alpha([X, Y]).
	\]
	Since \( \alpha \) annihilates \( E \), \( \alpha(X) = 0 \) and \( \alpha(Y) = 0 \) identically on \( U \). Thus, \( \mathrm{d}\alpha(X, Y) = -\alpha([X, Y]) \). Our assumption implies \( \mathrm{d}\alpha(X, Y) = 0 \), yielding \( \alpha([X, Y]) = 0 \) for all \( \alpha \in E^0(1)(U) \). Assuming the annihilator forms separate points (which holds for closed subbundles in Fr\'echet spaces), this yields \( [X, Y] \in \Vfields{\infty}{U}{E} \). Hence, \( E \) is involutive.
	
	Now, assume \( E \) is involutive and let \( \omega \in E^0(p)(U) \). Then
	\begin{align*}
		\mathrm{d}\omega(X_0, \dots, X_p) &= \sum_{i=0}^p (-1)^i \mathcal{L}_{X_i} \big( \omega(X_0, \dots, \widehat{X_i}, \dots, X_p) \big) \\
		&\quad + \sum_{0 \le i < j \le p} (-1)^{i+j} \omega([X_i, X_j], X_0, \dots, \widehat{X_i}, \dots, \widehat{X_j}, \dots, X_p).
	\end{align*}
	In the first sum, each evaluation of \( \omega \) has \( p \) arguments belonging to \( E \), so it vanishes because \( \omega \in E^0(p)(U) \). In the second sum, since \( E \) is involutive, the bracket \( [X_i, X_j] \in \Vfields{\infty}{U}{E} \). Thus, every evaluation of \( \omega \) again involves \( p \) vector fields in \( E \), causing the entire sum to vanish. Therefore, \( \mathrm{d}\omega(X_0, \dots, X_p) = 0 \), which means \( \mathrm{d}\omega \in E^0(p+1)(U) \).
\end{proof}

\begin{corollary}\label{cor:frobenious}
	Let \( \TB{D} \subseteq \mathrm{T}\fs{M} \) be a split tangent subbundle of a Keller's \( C_c^\infty \)-Fr\'echet manifold \( \fs{M} \) modeled on \( \fs{F} = \fs{F}_1 \oplus \fs{F}_2 \). Assume that for every open subset \( U \subseteq \fs{M} \), the exterior derivative maps the local annihilator of \( \TB{D} \) into the next degree$\colon$
	\[
	\mathrm{d}\left( \TB{D}^0(1)(U) \right) \subseteq \TB{D}^0(2)(U).
	\]
	If \( \TB{D} \) satisfies the local well-posedness condition \( W \) in Definition \ref{def:condition_W}, then \( \TB{D} \) is integrable. 
\end{corollary}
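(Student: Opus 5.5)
The plan is to reduce Corollary~\ref{cor:frobenious} to Theorem~\ref{thm:global_frobenius} (equivalently, the implication (iii)\(\Rightarrow\)(ii) of Theorem~\ref{thm:global_frobenius_foliation}). Those results already give integrability from involutivity together with Condition W. Condition W is assumed directly, so only one thing needs proof: the annihilator hypothesis \(\mathrm{d}(\TB{D}^0(1)(U)) \subseteq \TB{D}^0(2)(U)\) for all open \(U\) forces the involutivity condition~\ref{itm:fr.1}. That is precisely the first assertion of Proposition~\ref{prop:frobenius_ideal}, applied with \(E = \TB{D}\). Since \(\fs{M}\) is \(C_c^\infty\), the smoothness index \(k\geq 2\) required in Theorem~\ref{thm:global_frobenius} is automatic.

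The steps, in order, are as follows. First, fix local sections \(\vf{V},\vf{W}\) of \(\TB{D}\) over an open \(U\). For any \(\alpha \in \TB{D}^0(1)(U)\), formula~\eqref{eq:ex} gives \(\mathrm{d}\alpha(\vf{V},\vf{W}) = -\alpha([\vf{V},\vf{W}])\), because \(\alpha(\vf{V}) = \alpha(\vf{W}) = 0\). The hypothesis then yields \(\alpha([\vf{V},\vf{W}])=0\) for every such \(\alpha\). Second, I need to conclude \([\vf{V},\vf{W}]\in \TB{D}\). Here I would make the separation argument explicit using the splitting rather than just citing the remark in Proposition~\ref{prop:frobenius_ideal}. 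In a chart where \(\TB{D}\) is the graph of \(\eu{F}\colon U\times V\to\mathcal{L}_c(\fs{F}_1,\fs{F}_2)\), define the \(\fs{F}_2\)-valued \(1\)-form
\[
\alpha_{(x,y)}(v_1,v_2) \coloneqq v_2 - \eu{F}(x,y)v_1 .
\]
This form is \(C_c^\infty\), and its kernel is exactly \(\TB{D}_{(x,y)}\). Taking \(\fs{E}=\fs{F}_2\) as the coefficient space, \(\alpha \in \TB{D}^0(1)(U)\), so a single annihilator already separates. Vanishing of \(\alpha([\vf{V},\vf{W}])\) therefore means the bracket lies in the graph, which is equation~\eqref{eq:fr1}; hence~\ref{itm:fr.1} holds. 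Third, invoke Theorem~\ref{thm:global_frobenius}. With~\ref{itm:fr.1} and Condition W, it produces an atlas of Frobenius charts and a unique maximal connected integral manifold through each point, which is integrability in the sense of Theorem~\ref{thm:global_frobenius_foliation}(ii).

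The main obstacle is the second step: passing from ``every annihilating form kills the bracket'' to ``the bracket lies in \(\TB{D}\)''. In a general Fréchet setting, real-valued annihilators need not separate, since Hahn--Banach only works fiberwise and the forms must also be smooth. The fix I would use is the vector-valued form built from the local graph representation and the splitting \(\fs{F}=\fs{F}_1\oplus\fs{F}_2\). One point must be checked: the annihilator spaces in the hypothesis are taken with \(\fs{E}\)-valued forms, and \(\fs{E}=\fs{F}_2\) must be allowed. If the statement is read with a fixed \(\fs{E}\), I would fall back on composing \(\alpha\) with continuous linear maps \(\fs{F}_2\to\fs{E}\) and argue separation via functionals on \(\fs{F}_2\). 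A minor additional technicality is that \(\alpha\) is only defined on a chart domain. This is harmless because the hypothesis is assumed for every open \(U\).
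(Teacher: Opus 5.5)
Your proposal is correct and follows the route the paper intends; the paper gives no written proof of this corollary. The first assertion of Proposition~\ref{prop:frobenius_ideal} turns the annihilator hypothesis into involutivity, and Theorem~\ref{thm:global_frobenius} then combines involutivity with Condition W to give integrability. Your explicit $\fs{F}_2$-valued annihilator $v_2-\eu{F}(x,y)v_1$ is a real improvement within that route: composed if needed with functionals on $\fs{F}_2$, it actually proves the separation step, which the paper's proof of Proposition~\ref{prop:frobenius_ideal} only assumes.
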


\end{document}